\theoremstyle{definition}
\newtheorem{assumption}{Assumption}
\newtheorem{definition}{Definition}
\newtheorem{remark}{Remark}
\theoremstyle{plain}
\newtheorem{theorem}{Theorem}
\newtheorem{lemma}{Lemma}
\title{\LARGE \bf
Structural Results for Decentralized Stochastic Control with a Word-of-Mouth Communication}
\author{Aditya Dave, {\itshape{Student Member, IEEE,}} and Andreas A. Malikopoulos, {\itshape{Senior Member, IEEE}} 
	\thanks{This research was supported in part by ARPAE's NEXTCAR program under the award number DE-AR0000796 and by the Delaware Energy Institute (DEI).} %
	\thanks{The authors are with the Department of Mechanical Engineering, University of Delaware, Newark, DE 19716 USA (email: \texttt{adidave@udel.edu; andreas@udel.edu).}} }
\begin{document}

\maketitle
\thispagestyle{empty}

\begin{abstract}

In this paper, we analyze a network of agents that communicate through the ``word of mouth," in which, every agent communicates only with its neighbors. We introduce the prescription approach, present some of its properties and show that it leads to a new information state. We also state preliminary structural results for optimal control strategies in systems that evolve using word-of-mouth communication. The proposed approach can be generalized to analyze several decentralized systems.

\end{abstract}

\section{INTRODUCTION}

As we move to increasingly complex systems \cite{Malikopoulos2015} new decentralized control approaches are needed to optimize the impact on system behavior of the interaction between its entities \cite{Malikopoulos}. Centralized stochastic control has been the ubiquitous approach to control complex systems so far \cite{Malikopoulos2016c}. A key assumption in centralized stochastic control problems is that a singular decision maker perfectly recalls all previous control actions and observations. The information available to an agent when making a decision is called the \textit{information structure} of the system. The centralized information structure is classified as the \textit{classical information structure}.

While centralized systems have been extensively studied \cite{23}, the classical information structure does not apply to many applications involving multiple agents \cite{Malikopoulos2018}. In these applications, all agents simultaneously make a decision based only on their local information and information received through delayed, or costly communication with other agents \cite{Malikopoulos2018d}. Thus, a centralized knowledge of the complete information in the system is infeasible \cite{18}. These information structures are classified as \textit{non-classical information structures}. Ultimately, we end up with multi-stage optimization problems \cite{Malikopoulos2015b}, known as decentralized stochastic control problems.

Decentralized stochastic control has proven to be very challenging as the most common approach to derive centralized optimal control policies, dynamic programming (DP), is not directly applicable to non-classical information structures due to a lack of separation between estimation and control. There are three general approaches in the literature for these problems that use techniques from centralized stochastic control: (1) the person-by-person approach, (2) the designer's approach, and (3) the common information approach. Due to space limitations, it is very difficult to cite all the literature around these three approaches. For more details, the reader may refer to the tutorial by Mahajan et al. \cite{Mahajan2012} and the references therein.

1) In the \textit{person-by-person approach}, the control strategies of all agents except one are arbitrarily fixed. Only the control strategy of the chosen agent is then optimized for this new centralized problem. Repeating this process for all agents allows for the derivation of \textit{structural results} and DP for a person-by-person optimal strategy, that is not globally optimal in general. However, every globally optimal strategy must necessarily be person-by-person optimal. Some applications of this approach can be found in \cite{4,5,11,7,8,6,2,1}.

2) The \textit{designer's approach} takes the point of view of a designer with knowledge of the system model and statistics. The designer's task is the \text{blue}{computationally challenging selection of} the globally optimal control strategy for the system by transforming the problem into a centralized planning problem. Some applications of this approach can be found in \cite{3,19,10,lall_broadcast}.

3) A more recent development in this field is the \textit{common information approach} developed for problems with partial history sharing \cite{17}, and then formalized for general decentralized systems \cite{14}. The solution is derived by reformulating the system from the viewpoint of a fictitious \textit{coordinator} whose task is to prescribe control laws to every agent in the system. Some applications can be found in a variety of problems including symmetric delayed information sharing structures \cite{14}, control sharing information structures \cite{16}, stochastic games with asymmetric information \cite{20}, teams with mean-field sharing \cite{Mahajan2015} and systems with unreliable uplink channels \cite{asghari2018optimal}.There are some earlier papers that used similar ideas in analyzing specific information structures \cite{yuksel2009stochastic, yoshikawa1978decomposition, aicardi1987decentralized, casalino1984partially}.

In this paper, we introduce and study a decentralized system with multiple agents who communicate with \textit{word of mouth}. In a word-of-mouth communication, we have a network of agents, where each agent may directly communicate only with its neighbors in the network. Thus, information from each agent propagates in the network through its neighbors who share it with their neighbors, and so on. Each link in the network has a delay associated with it, which can be thought of as the time it takes for the information to transmit from an agent to its neighbor. This problem has a non-classical information structure because of the delays in communication.

We consider the common information approach to be the standard approach in solving a wide variety of decentralized stochastic control problems, including problems with a word-of-mouth information structure. However, we find that in problems with asymmetric communication, there may not be a lot of common information available to all agents in the system \cite{Aditya_2019}. This has motivated us to continue looking for structural results that can improve on the performance of the common information approach by taking into account the asymmetries in a system. 

The contributions of this paper are:

1) We introduce and analyze a problem with a word-of-mouth information structure.

2) We present the prescription approach and its properties which lead to a reformulation of the problem from the point of view of every agent, with a state sufficient for input output mapping and information state for each reformulation.

3) We state some preliminary structural results with time-invariant domains that arise from the prescription approach.

The rest of the paper is organized as follows. In Section II, we present the problem and the information structure of the system. In Section III, we provide a reformulation of the problem and, in Section IV, we derive the preliminary results for optimal strategies. Finally, in Section V, we draw concluding remarks, and present some ideas for future work. 

\subsection{Notation}

Random variables are denoted by upper case letters and their realization by the corresponding lower case letters. For integers $a<b$, $X_{a:b}$ is shorthand for the vector $(X_a,X_{a+1},\ldots,X_b)$ and $X^{a:b}$ is shorthand for the vector $(X^a,X^{a+1},\ldots,X^b)$. When $a>b$, the dimension of $X^{a:b}$ is 0. The combined notation with $c<d$ and $a<b$, we write $X_{a:b}^{c:d}$ to denote the vector $(X_i^j: i=a,\ldots,b; \, j=c,\ldots,d)$.


For sets $A$ and $B$, $\{A,B\}$ is the set $A \cup B$. For a singleton $\{a\}$ and set $B$, $\{a,B\}$ is the set $\{a\} \cup B$. The function $|\cdot|$ returns the cardinality of a set. The null set is represented by $\emptyset$. We have attempted to use notation consistent with \cite{17} as our work is closely related to it.

The probability and expectation measures that depend on a vector $\boldsymbol{g}$ are written as $\mathbb{P}^{\boldsymbol{g}}(\cdot)$ and $\mathbb{E}^{\boldsymbol{g}}(\cdot)$ respectively.
All equalities involving random variables hold with a probability of 1.

\section{PROBLEM FORMULATION}

\subsection{The Network of Agents}

Consider a network of $K \in \mathbb{N}$ agents represented by a strongly connected  graph $\mathcal{G} =(\mathcal{K}, \mathcal{E})$, where $\mathcal{K} := \{1,\ldots,K\}$ is the set of agents and $\mathcal{E}$ is the set of links. A link from an agent $k \in \mathcal{K}$ to an agent $j \in \mathcal{K}$ is denoted by $(k,j) \in \mathcal{E}$. Every link $(k,j)$ represents a communication link from agent  $k$ to  $j$ which is characterized by a delay of $\delta^{[k,j]} \in \mathbb{N}$ time steps for transferring information from $k$ to $j$.

When agent $k$ sends out information to agent $j$ through link $(k,j)$, we call it \textit{transmission of information}. The information transmitted by agent $k$ at time $t$ is received by agent $j$ at time $t + \delta^{[k,j]}$. For any agent $k$, the acts of receiving and transmission of information occur at different instances within every time step as discussed in Section II-D.

\begin{definition}
\label{path}
Let $\mathcal{N} = \{1,\ldots,m : m \in \mathcal{K}\}$ be a set of indices. For any $k,j \in \mathcal{K}$, a \textit{path}  $q^{[k,j]}_a$, $a \in \mathbb{N}$, from $k$ to $j$ is given by the sequence $\{k_n\}_{n\in\mathcal{N}}$ such that: (1) $k_1 = k$ and $k_m = j$, (2) $k_n \in \mathcal{K}$ for $n \in \mathcal{N}$, and (3) there exists a link $(k_{n-1},k_n) \in \mathcal{E}$ for $n \in \mathcal{N}\setminus\{1\}$.
\end{definition}

The set $\mathcal{Q}^{[k,j]} = \{q^{[k,j]}_a: a =1,\ldots,b; \; b \in \mathbb{N}\}$ includes all paths from agent $k$ to agent $j$.

\begin{definition}
\label{path_delay}
Let agents $k,j \in \mathcal{K}$ with a path $q^{[k,j]}_a$ from $k$ to $j$. The \textit{communication delay} $d^{[k,j]}_a \in \mathbb{N}$ for $q^{[k,j]}_a$ is defined as
\begin{equation*}
    d^{[k,j]}_a = \delta^{[k,k_2]}+\cdots+\delta^{[k_{m-1},j]},
\end{equation*}
where $\delta^{[k_{n-1},k_n]}$ is the delay in information transfer through the link $(k_{n-1},k_n) \in \mathcal{E}$.
\end{definition}

The \textit{information path}, defined formally next, from agent $k$ to agent $j$ in the network is the path with the least possible delay.

\begin{definition}
The \textit{information path} from $k$ to $j$ denoted by $(k \rightarrow j)$ is given by a path $q_{{a}}^{[k,j]} \in \mathcal{Q}^{[k,j]}$ such that,
\begin{gather}
    d^{[k,j]}_{{a}} = \min\left\{d_1^{[k,j]},\ldots,d_b^{[k,j]}\right\},
\end{gather}
where $b := |\mathcal{Q}^{[k,j]}|$.
\end{definition}

The strongly connected nature of the network ensures that there is always an information path $(k \rightarrow j)$ from every other agent $k \in \mathcal{K}$ to every agent $j \in \mathcal{K}$. We denote the associated delay simply by $d^{[k,j]}$ and, by convention, we set $d^{[k,k]} = 0$. Also note that because the links in the network are directed, the delay $d^{[k,j]}$ in communication from $k$ to $j$ is not equal to the delay $d^{[j,k]}$ in communication from $j$ to $k$.

\subsection{System Description}
The network of agents is considered a discrete time system that evolves up to a finite time horizon $T \in \mathbb{N}$. At time $t \in \mathcal{T}$, $\mathcal{T} = \{0,1,\ldots,T\}$, the state of the system $X_t$ takes values in a finite set $\mathcal{X}$ and the control variable $U_t^k$ associated with agent $k \in \mathcal{K}$, takes values in a finite set $\mathcal{U}^k$. Let ${U}_t^{1:K}$ denote the vector $(U_t^1,\ldots,U_t^K)$. Starting at the initial state $X_0$, the evolution of the system follows the state equation
\begin{equation}
X_{t+1}=f_t\left(X_t,U_t^{1:K},W_t\right), \label{st_eq}
\end{equation}
where $W_t$ is the uncontrolled disturbance to the system represented as a random variable taking values in a finite set $\mathcal{W}$. At time $t$ every agent $k$ makes an observation $Y_t^k$, given by
\begin{equation}
Y_t^k=h_t^k(X_t,V_t^k), \label{ob_eq}
\end{equation}
which takes values in a finite set $\mathcal{Y}^k$ through a noisy sensor, where $V_t^k$ takes values in the finite set $\mathcal{V}^k$ and represents the noise in measurement.

Agent $k$ selects a control action $U_t^k$ from the set of feasible control actions $\mathcal{U}_t^k$ as a function of its information structure. The information structure is different for each agent $k \in \mathcal{K}$ because of the means of communication and topology of the network. We discuss the information structure in Section II-D. After each agent $k$ generates a control action $U_t^k$, the system incurs a cost $c_t(X_t,U_t^{1:K}) \in \mathbb{R}$.

\subsection{Assumptions}

In our modeling framework above, we impose the following assumptions:

\begin{assumption}
\label{top_asu}
The network topology is arbitrary, known a priori, and does not change with time.
\end{assumption}

With a known and invariable network topology, every agent can keep track of what information is accessible to other agents in the network.

\begin{assumption}
\label{prim_asu}
The external disturbance $\{W_t: t \in \mathcal{T}\}$ and the noise in measurement $\{V_t^k: t \in \mathcal{T}, \, k \in \mathcal{K}\}$ are sequences of independent random variables that are also independent of each other and of the initial state $X_0$. 
\end{assumption}

The external disturbance, noise in measurement, and initial state are referred to as the \textit{primitive} random variables, and they  have known probability distributions. 

\begin{assumption}
\label{func_asu}
The state functions $(f_{t}: t \in \mathcal{T})$, observation functions $(h_{t}^{k}: t \in \mathcal{T}, \, k \in \mathcal{K})$, the cost functions $(c_{t}: t \in \mathcal{T}),$ and the set of all feasible control policies $G$ are known to all agents.
\end{assumption}

These functions and the set of feasible control policies (explained in Section II-D) form the basis of the decision making problem.

\begin{assumption}
Each agent has perfect recall.
\end{assumption}

Perfect recall of the data from the memory of every agent is an essential assumption for the structural results derived in this paper.

\begin{figure}[ht!]
  \centering
  \captionsetup{justification=centering}
  \includegraphics[width=0.9\linewidth, keepaspectratio]{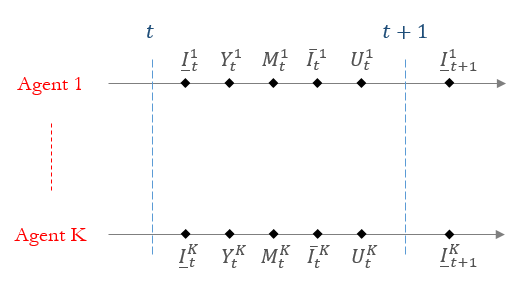}
  \caption{Sequence of activities.}
  \label{fig:update}
\end{figure}

We summarize below the sequence of activities taken by agent $k \in \mathcal{K}$ at time $t$ (Fig. \ref{fig:update}):
\begin{enumerate}
\item The state $X_t$ is updated based on (\ref{st_eq}).
\item Agent $k$ receives information from all agents in $\mathcal{K}$, collectively denoted by $\underline{I}_t^k$.
\item Agent $k$ makes an observation about the state $Y_t^k$ based on (\ref{ob_eq}).
\item Agent $k$ updates its memory, $M_t^k$, defined in Section II-D, on a given protocol.
\item Agent $k$ transmits information denoted by $\overline{I}_t^k$ to every agent $j \in \mathcal{K}$ through the shortest path $q_{{a}}^{[k,j]}$.
\item Agent $k$ generates a control action $U_t^k$.
\end{enumerate}

\subsection{Information Structure of the System}

The information structure of the system is characterized by the graph topology and delays along communication paths described in Section II-A. In the word-of-mouth information structure, every agent $j \in \mathcal{K}$ at time $t$ transmits the information $\overline{I}_t^j := \{Y_{t}^j,U_{t-1}^j\}$ to every other agent in the network through the relevant information paths. Agent $k \in \mathcal{K}$ receives information $\overline{I}_t^j$ at time $t + d^{[j,k]}$, where $d^{[j,k]}$ is the communication delay from $j$ to $k$.
Then, the information available to agent $k$ at time $t$ is the collection of information he received from every agent $j \in \mathcal{K}$ at time steps $0$ through $t$.

\begin{definition}
The \textit{memory} of agent $k \in \mathcal{K}$ is defined as the random variable $M_t^k$ that takes values in the finite set $\mathcal{M}_t^k$ and is given by
\begin{align}
M_t^k := &\left\{Y^j_{0:t-d^{[j,k]}},U_{0:t-d^{[j,k]}-1}^j: j \in \mathcal{K} \right\}, \label{eq_mem}
\end{align}
where $d^{[j,k]}$ is the delay in information transfer from every agent $j \in \mathcal{K}$ to agent $k$.
\end{definition}

At time $t$, agent $k$ accesses his memory $M_t^k$ to generate a control action, namely,
\begin{gather}
    U_t^k := g_t^k(M_t^k), \label{u_basic}
\end{gather}
where $g_t^k$ is the control policy of agent $k$ at time $t$. We define the control policy for each agent as $\boldsymbol{g}^k := (g_0^k,\ldots,g_T^k)$ and the control policy of the system as $\boldsymbol{g} := (\boldsymbol{g}^1,\ldots,\boldsymbol{g}^K)$. The set of all feasible control policies is denoted by $G$.

The performance criterion for the system is given by the total expected cost:
\begin{equation}
\textbf{Problem 1:}~~~~~ \mathcal{J}(\boldsymbol{g}) = \mathbb{E}^{\boldsymbol{g}}\left[\sum_{t=0}^T{c_t(X_t,U_t^{1:K})}\right], \label{per_cri}
\end{equation}
where the expectation is with respect to the joint probability measure on the random variables $\{X_t, U_t^1,\ldots,U_t^K\}$.

The optimization problem is to select the optimal control policy $\boldsymbol{g}^* \in G$ that minimizes the performance criterion in \eqref{per_cri}, given the probability distributions of the primitive random variables $\{X_0,W_{0:T},V_{0:T}^1,\ldots,V_{0:T}^K\}$, and functions $\left\{c_t,f_t,h_t^{k}:t \in \mathcal{T}, \, k \in \mathcal{K} \right\}$.

\section{THE PRESCRIPTION APPROACH}

\subsection{Construction of Prescriptions}

For an agent $k \in \mathcal{K}$, we consider a scenario where the control action $U_t^k$ is generated in two stages:

(1) Agent $k$ generates a function based on information which is a subset of the information available in its memory $M_t^{{k}}$.

(2) This function takes as an input the compliment of the subset used to generate it, and yields the control action $U_t^{{k}}$.

We call these functions \textit{prescriptions}. They allow us to construct an optimization problem of selecting the optimal \textit{prescription strategy} that is equivalent to the problem of selecting the optimal control policy $\boldsymbol{g}^{*k}$ as we show next. In this section, we construct the subset of the memory $M_t^{{k}}$ and prescriptions for every agent ${{k}} \in \mathcal{K}$ without changing the information structure of the system. In order to simplify the notation, we first define the set of agents located beyond agent $k$ in the set of all agents.

\begin{definition}\label{def:setB}
For an agent ${{k}} \in \mathcal{K}$, the set of agents beyond ${{k}}$ is defined as $\mathcal{B}^{{{k}}} := \{{{j}} \in \mathcal{K}: {{j}} \geq {{k}}\}$.
\end{definition}

Now we can define the information used to generate prescriptions.

\begin{definition} \label{def_A}
Let $k \in \mathcal{K}$ and $M_t^{{k}}$ be the agent's memory at time $t$. The \textit{accessible information} of agent ${{k}}$ is defined as the set $A_t^{{k}}$ that takes values in the finite collection of sets $\mathcal{A}_t^{{k}}$ such that
\begin{gather}
    A_t^{{k}} = \bigcap_{{{i}}=1}^{{k}}\left(M_t^{{i}}\right). \label{ainfo_def}
\end{gather}
\end{definition}

For example, we can write \eqref{ainfo_def} for agent $1$ as
\begin{gather}
    A_t^{{1}} = M_t^{{1}},
\end{gather}
and for agent $2$ as
\begin{gather}
    A_t^{{2}} = M_t^{{1}} \cap M_t^{{2}}.
\end{gather}

Based on Definition \ref{def_A}, the accessible information $A_t^{{k}}$ has the following properties:
\begin{align}
    & A_{t-1}^{{k}} \subset A_t^{{k}}, \label{ainfo_prop_2} \\
    & A_t^{{j}} \subset A_t^{{k}}, \; \forall {{j}} \in \mathcal{B}^{{k}}, \label{ainfo_prop_1}
\end{align}
where $\mathcal{B}^{{k}}$ is the set of agents beyond ${{k}}$. 
Property \eqref{ainfo_prop_2} motivates the introduction of a new term to denote the new information added to accessible information $A_t^{{k}}$ at time $t$.

\begin{definition}
The \textit{new information} for agent $k$ at time $t$ is defined as the set $Z_t^{{k}}$ that takes values in a finite collection of sets $\mathcal{Z}_t^{{k}}$ such that
\begin{align}
Z_t^{{k}} := A_t^{{k}} \backslash A^{{k}}_{t-1}. \label{Z_def}
\end{align}
\end{definition}

We observe in \eqref{ainfo_prop_1} that the accessible information $A_t^{{j}}$ of any agent ${{j}} \in \mathcal{B}^{{k}}$ is a subset of the memory $M_t^{{k}}$. Thus, we can define the \textit{inaccessible information} of the agent ${{k}}$ with respect to the accessible information $A_t^{{j}}$ for every ${{j}} \in \mathcal{B}^{{k}}$.

\begin{definition}
The \textit{inaccessible information} of agent ${{k}}$ with respect to accessible information $A_t^{{j}}$, ${{j}} \in \mathcal{B}^{{k}}$, is defined as the set of random variables $L_t^{[{{k}},{{j}}]}$ that takes values in the finite collection of sets $\mathcal{L}_t^{[{{k}},{{j}}]}$ such that
\begin{gather}
    L_t^{[{{k}},{{j}}]} := M_t^{{k}} \setminus A_t^{{j}}. \label{inacc_def}
\end{gather}
\end{definition}


The pair of sets $A_t^{{j}}$ and $L_t^{[{{k}},{{j}}]}$ forms a partition of the set $M_t^{{k}}$, such that
\begin{gather}
    M_t^{{k}} = \{L_t^{[{{k}},{{j}}]},A_t^{{j}}\}, \; \forall {{j}} \in \mathcal{B}^{{k}}. \label{eq_partition}
\end{gather}

As an example, consider a system with three agents Fig. \ref{fig:inaccessible}. In this system, we have
\begin{align}
    &A_t^{1} = M_t^{1}, \nonumber \\
    &A_t^{3} \subset A_t^{2} \subset M_t^{1}, \nonumber \\
    &M_t^{1} = \{A_t^{2},L_t^{[{{1}},{{2}}]}\} = \{A_t^{3},L_t^{[{{1}},{{3}}]}\}.
\end{align}
For agents $2$ and $3$, we can derive similar relationships as illustrated in Fig. \ref{fig:inaccessible}.

\begin{figure}[h!]
  \centering
  \captionsetup{justification=centering}
  \includegraphics[height = 8cm, keepaspectratio]{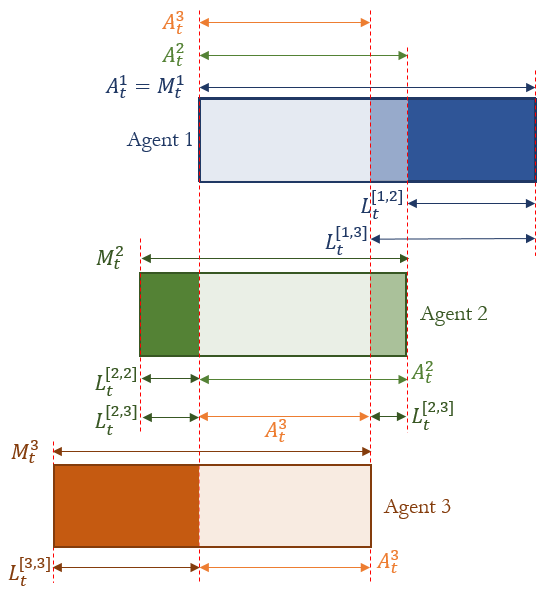}
  \caption{Memory partitions of three agents.}
  \label{fig:inaccessible}
\end{figure}

Next, we use these partitions of the memory to define the \textit{prescription function}.

\begin{definition}
The \textit{prescription function} $\Gamma_t^{[{{k}},{{j}}]}$ of an agent ${{k}} \in \mathcal{K}$ for the agent ${{j}} \in \mathcal{K}$ is defined as follows
\begin{align} \label{pres_func_def}
    \Gamma_t^{[{{k}},{{j}}]} :
    \begin{cases}
        \mathcal{L}_t^{[{{j}},{{k}}]} \longrightarrow \mathcal{U}_t^{{j}},  \text{ if } {{j}} \not\in \mathcal{B}^{{k}}, \\
        \mathcal{L}_t^{[{{j}},{{j}}]} \longrightarrow \mathcal{U}_t^{{j}},  \text{ if } {{j}} \in \mathcal{B}^{{k}},
    \end{cases}
\end{align}
and takes values in the set of feasible prescription functions $\mathscr{G}_t^{[{{k}},{{j}}]}$.
\end{definition}

\begin{remark}
In Definition 9, the inaccessible information of  agent $k$ is defined with respect to the accessible information $A_t^{{j}}$ for ${{j}} \in \mathcal{B}^{{k}}$. Note that in the first part of \eqref{pres_func_def}, we have ${{k}} \in \mathcal{B}^{{j}}$, and thus \eqref{pres_func_def} holds.
\end{remark}

Every prescription function $\Gamma^{[{{k}},{{j}}]}$ is generated as follows
\begin{align} \label{eq_gen_pres}
    \Gamma_t^{[{{k}},{{j}}]} :=
    \begin{cases}
    {\psi}_t^{[{{k}},{{j}}]}(A_t^{{k}}), \text{ if } {{j}} \not\in \mathcal{B}^{{k}}, \\
    {\psi}_t^{[{{k}},{{j}}]}(A_t^{{j}}), \text{ if } {{j}} \in \mathcal{B}^{{k}},
\end{cases}
\end{align}
where we call ${\psi}_t^{[{{k}},{{j}}]}$ the \textit{prescription strategy} of the agent ${{k}}$ for the agent ${{j}}$ given by the mapping
\begin{align}
    \psi_t^{[{{k}},{{j}}]} :
    \begin{cases}
        \mathcal{A}_t^{{{k}}} \longrightarrow \mathscr{G}_t^{[{{k}},{{j}}]},  \text{ if } {{j}} \not\in \mathcal{B}^{{k}}, \\
        \mathcal{A}_t^{{{j}}} \longrightarrow \mathscr{G}_t^{[{{k}},{{j}}]},  \text{ if } {{j}} \in \mathcal{B}^{{k}}.
    \end{cases}
\end{align}

We call $\boldsymbol{\psi}^{{k}} := (\boldsymbol{\psi}^{[{{k}},{{1}}]},\ldots,\boldsymbol{\psi}^{[{{k}},{{K}}]})$  the \textit{prescription strategy} of the agent ${{k}}$. The set of feasible prescription strategies for the agent ${{k}}$ is denoted by $\Psi^{{k}}$. 

\begin{remark}
The prescription $\Gamma_t^{{[{{k}},{{j}}]}}$ of  agent $k$ for agent $j$ is only available to agent $k$. The equivalent prescription available to agent $j$ is $\Gamma_t^{{[{{j}},{{j}}]}}$. The relationship between the two is given in Lemmas \ref{lem_psi_relation1} and \ref{lem_psi_relation2} in Section III-B.
\end{remark}

\begin{remark}
Every agent needs to generate prescriptions corresponding to every other agent in the system so that we can define the \textit{information state} in Section IV-B.
\end{remark}

Next, we define the \textit{complete prescription} of an agent ${{k}}$ below.

\begin{definition}
The \textit{complete prescription} for agent ${{k}}$ is given by the function
\begin{align}
    \Theta_t^{{k}} : \;
    \mathcal{L}_t^{[{{1}},{{k}}]} \times \cdots \times \mathcal{L}_t^{[{{k}},{{k}}]} &\times \mathcal{L}_t^{[{{k+1}},{{k+1}}]} \times \cdots \times \mathcal{L}_t^{[{{K}},{{K}}]} \nonumber \\
    &\longrightarrow \mathcal{U}_t^{{1}} \times \cdots \times \mathcal{U}_t^{{k}},
\end{align}
which takes values in the set of functions $\mathscr{G}_t^{{k}}$.
\end{definition}

The complete prescription for agent ${{k}}$ is constructed as $\Theta_t^{{k}} = (\Gamma_t^{[{{k}},{{1}}]},\ldots,\Gamma_t^{[{{k}},{{K}}]})$.

\subsection{Relationships Between Prescriptions and Control Policies}

In this section we present the relationships between the different prescriptions and control policies.
The first result states that for an agent $k \in \mathcal{K}$ we can use the complete prescription $\Theta_t^{{k}}$ to generate control action $U_t^k$ instead of the control policy $g_t^k$.

\begin{lemma} \label{lem_psi_g_relation}
Let agent $k \in \mathcal{K}$ and let $\Theta_t^{{k}}$ be its complete prescription. For any given control policy $\boldsymbol{g} \in G$, there exists a prescription strategy $\boldsymbol{\psi}^{{k}} \in \Psi^{{k}}$ such that
\begin{gather}
    U_t^{{k}} = \Gamma_t^{[{{k}},{{k}}]}\left(L_t^{[{{k}},{{k}}]}\right). \label{u_presc}
\end{gather}
\end{lemma}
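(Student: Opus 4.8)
The plan is to show that the two-stage decomposition described in Section III-A faithfully reproduces the control action $U_t^k = g_t^k(M_t^k)$, by exhibiting the required prescription strategy $\boldsymbol{\psi}^k$ explicitly and then checking that the composed map agrees with $g_t^k$ on every realization. The starting point is the partition identity \eqref{eq_partition}, which for $j = k$ reads $M_t^k = \{L_t^{[k,k]}, A_t^k\}$; this is what licenses the split of $g_t^k$'s argument into the "accessible part" $A_t^k$ (used to build the prescription) and the "inaccessible part" $L_t^{[k,k]}$ (fed into the prescription).

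First I would define, for each $j \in \mathcal{B}^k$, the prescription strategy $\psi_t^{[k,j]}$ by setting $\psi_t^{[k,j]}(A_t^j)$ to be the function $\ell \mapsto g_t^j(\{\ell, A_t^j\})$ on $\mathcal{L}_t^{[j,j]}$ — i.e. the partial evaluation of agent $j$'s control policy with its accessible-information coordinates frozen at $A_t^j$. This is well-defined as a map $\mathcal{A}_t^j \to \mathscr{G}_t^{[k,j]}$ precisely because $M_t^j = \{L_t^{[j,j]}, A_t^j\}$ (again by \eqref{eq_partition}), so every value in the domain $\mathcal{M}_t^j$ of $g_t^j$ is hit exactly once by a pair $(\ell, A_t^j)$ with $\ell \in \mathcal{L}_t^{[j,j]}$. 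For $j \notin \mathcal{B}^k$ one does the analogous thing using $A_t^k$ in place of $A_t^j$ and $L_t^{[j,k]}$ in place of $L_t^{[j,j]}$; these components do not actually matter for \eqref{u_presc} but are needed so that $\boldsymbol{\psi}^k = (\boldsymbol{\psi}^{[k,1]},\ldots,\boldsymbol{\psi}^{[k,K]}) \in \Psi^k$ is a complete prescription strategy. I would also verify feasibility, i.e. that the constructed $\psi_t^{[k,j]}$ lands in $\mathscr{G}_t^{[k,j]}$ — since partial evaluations of feasible control policies respect the feasible-action constraints $\mathcal{U}_t^j$, this is immediate from Assumption 3.

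Next I would plug in: by \eqref{eq_gen_pres} with $j = k \in \mathcal{B}^k$, we get $\Gamma_t^{[k,k]} = \psi_t^{[k,k]}(A_t^k)$, and evaluating this function at $L_t^{[k,k]}$ gives, by the definition above, $g_t^k(\{L_t^{[k,k]}, A_t^k\}) = g_t^k(M_t^k) = U_t^k$, which is exactly \eqref{u_presc}. Two small points deserve a sentence each: first, the identification $\{L_t^{[k,k]}, A_t^k\} = M_t^k$ as the argument of $g_t^k$ uses \eqref{eq_partition} and the convention that a control policy acts on the (unordered) contents of the memory; second, the equality in \eqref{u_presc} is understood, like all equalities in the paper, to hold with probability $1$, and since it holds pathwise by construction this is automatic.

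The main obstacle is essentially bookkeeping rather than mathematics: making sure the domains and codomains line up across the two cases of \eqref{pres_func_def}–\eqref{eq_gen_pres}, i.e. that the partial-evaluation construction really does produce an element of $\mathscr{G}_t^{[k,j]}$ and that $A_t^j$ (resp. $A_t^k$) is a measurable function of the information the prescription strategy is allowed to depend on. Because $A_t^j \subseteq A_t^k \subseteq M_t^k$ by \eqref{ainfo_prop_1} and agent $k$ has perfect recall, every quantity used to build $\Gamma_t^{[k,j]}$ is available to agent $k$ at time $t$, so no information-structure constraint is violated — this is the one thing I would state carefully, since the whole point of the construction is that it "does not change the information structure of the system."
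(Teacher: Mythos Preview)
Your proposal is correct and follows essentially the same approach as the paper: define the prescription strategy by currying (partial evaluation) of $g_t^k$ at the accessible information $A_t^k$, then invoke the partition $M_t^k = \{L_t^{[k,k]}, A_t^k\}$ to conclude. You are more thorough than the paper in also constructing the components $\psi_t^{[k,j]}$ for $j \neq k$ and checking feasibility and information-structure consistency, whereas the paper's proof builds only the $[k,k]$ component and leaves the rest implicit; but the core idea is identical.
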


\begin{proof}
Let $A_t^{{k}}$ and $L_t^{[{{k}},{{k}}]}$ be the accessible and inaccessible information, respectively, of agent $k$. For any control policy $\boldsymbol{g}$ that generates $U_t^k$ from \eqref{u_basic}, we can select a prescription strategy $\psi_t^{{k}} : \mathcal{A}_t^{{{k}}} \longrightarrow \mathscr{G}_t^{[{{k}},{{k}}]}$ such that 
\begin{equation}
\Gamma_t^{[{{k}},{{k}}]}(\cdot) = \psi_t^{{k}}(A_t^{{{k}}})(\cdot) =  g_t^{{k}}(A^{{k}}_t,\cdot).
\end{equation}
Then, the control action is
\begin{gather}
    U_t^{{k}} = \Gamma_t^{[{{k}},{{k}}]}(L^{[{{k}},{{k}}]}_t)=g_t^{{k}}(A^{{k}}_t,L^{[{{k}},{{k}}]}_t) = g_t^{{k}}(M_t^{{k}}).
\end{gather}
\end{proof}

Similarly, for any prescription strategy $\boldsymbol{\psi}^{{k}}$, we can construct an appropriate control policy $\boldsymbol{g}$  that generates the same control actions $U_t^{{k}}$ for all agents in $\mathcal{K}$.

\begin{lemma}\label{lem_psi_g_relation_inv}
Let agent $k \in \mathcal{K}$ and let $\Theta_t^{{k}}$ be its complete prescription. For any given prescription strategy $\boldsymbol{\psi}^{{k}} \in \Psi^{{k}}$, there exists a control policy $\boldsymbol{g} \in G$ such that
\begin{gather}
     U_t^{{k}} = \Gamma_t^{[{{k}},{{k}}]}(L_t^{[{{k}},{{k}}]}) = g_t^{{k}}(M_t^{{k}}). \label{u_presc_inv}
\end{gather}
\end{lemma}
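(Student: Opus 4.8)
The plan is to run the construction in the proof of Lemma~\ref{lem_psi_g_relation} in reverse. Given $\boldsymbol{\psi}^{{k}} = (\boldsymbol{\psi}^{[{{k}},{{1}}]},\ldots,\boldsymbol{\psi}^{[{{k}},{{K}}]}) \in \Psi^{{k}}$, I would assemble a control policy $\boldsymbol{g} = (\boldsymbol{g}^{{1}},\ldots,\boldsymbol{g}^{{K}})$ by setting, for agent $k$ and each $t$,
\begin{equation*}
    g_t^{{k}}(M_t^{{k}}) := \psi_t^{[{{k}},{{k}}]}\!\left(A_t^{{k}}\right)\!\left(L_t^{[{{k}},{{k}}]}\right),
\end{equation*}
where $A_t^{{k}}$ and $L_t^{[{{k}},{{k}}]}$ are the two blocks of the partition \eqref{eq_partition} of $M_t^{{k}}$ taken with ${{j}}={{k}}$; for the remaining agents $j \neq k$ any feasible control laws $\boldsymbol{g}^{{j}}$ may be used (equivalently, one may reuse the components $\boldsymbol{\psi}^{[{{k}},{{j}}]}$ of $\boldsymbol{\psi}^{{k}}$ together with the appropriate partition of $M_t^{{j}}$ from \eqref{eq_partition}), since \eqref{u_presc_inv} constrains only agent $k$.

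Two things must then be verified: (i) that this prescribes a genuine element of $G$, and (ii) that it reproduces $\Gamma_t^{[{{k}},{{k}}]}$. For (i), the key point is that $A_t^{{k}}$, although defined in \eqref{ainfo_def} through $M_t^{{1}},\ldots,M_t^{{k}}$, is a subset of $M_t^{{k}}$, and — by Assumption~\ref{top_asu}, since the topology and the delays $d^{[{{j}},{{k}}]}$ are fixed and known a priori — the index set identifying which components of the tuple $M_t^{{k}}$ belong to $A_t^{{k}}$ is itself fixed and known. Hence $A_t^{{k}}$ is a deterministic coordinate projection of $M_t^{{k}}$, $L_t^{[{{k}},{{k}}]} = M_t^{{k}}\setminus A_t^{{k}}$ is the complementary projection, and $g_t^{{k}}$ above is a well-defined map $\mathcal{M}_t^{{k}}\to\mathcal{U}_t^{{k}}$; it is feasible because $\psi_t^{[{{k}},{{k}}]}$ returns a prescription in $\mathscr{G}_t^{[{{k}},{{k}}]}$, whose codomain is $\mathcal{U}_t^{{k}}$. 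Thus $\boldsymbol{g}^{{k}}=(g_0^{{k}},\ldots,g_T^{{k}})$ and, combined with the feasible $\boldsymbol{g}^{{j}}$, $\boldsymbol{g}\in G$.

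For (ii), I would argue by induction on $t$: assuming the memories $M_0^{{1:K}},\ldots,M_{t}^{{1:K}}$ are generated consistently under $\boldsymbol{g}$ according to \eqref{eq_mem}, the realized value of $A_t^{{k}}$ is exactly the argument on which $\psi_t^{[{{k}},{{k}}]}$ acts to form $\Gamma_t^{[{{k}},{{k}}]}$ in \eqref{eq_gen_pres} (note ${{k}}\in\mathcal{B}^{{k}}$, so the second branch applies). Therefore $U_t^{{k}} = g_t^{{k}}(M_t^{{k}}) = \psi_t^{[{{k}},{{k}}]}(A_t^{{k}})(L_t^{[{{k}},{{k}}]}) = \Gamma_t^{[{{k}},{{k}}]}(L_t^{[{{k}},{{k}}]})$, and this $U_t^{{k}}$ then feeds \eqref{st_eq} and hence the next memories, closing the induction.

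I expect the only real obstacle to be the bookkeeping in (i): making rigorous that $A_t^{{k}}$ — nominally a function of several agents' memories — can be recovered from $M_t^{{k}}$ alone, so that $g_t^{{k}}(\cdot)$ is a legitimate control law of the right type. This is precisely where the known, time-invariant topology of Assumption~\ref{top_asu} is used; everything else is the straightforward mirror image of the argument in Lemma~\ref{lem_psi_g_relation}.
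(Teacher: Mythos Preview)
Your proposal is correct and follows essentially the same construction as the paper: define $g_t^{{k}}(M_t^{{k}}) := \psi_t^{[{{k}},{{k}}]}(A_t^{{k}})(L_t^{[{{k}},{{k}}]})$ via the partition \eqref{eq_partition} and read off \eqref{u_presc_inv} directly. The paper's proof is a one-line version of this; your additional care in (i) about recovering $A_t^{{k}}$ from $M_t^{{k}}$ alone (using Assumption~\ref{top_asu}) and the induction in (ii) are sound but more detail than the paper supplies.
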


\begin{proof}
For any prescription strategy $\boldsymbol{\psi}^{{k}},$ we can construct a control policy $\boldsymbol{g}^{{k}}$ such that
\begin{align}
U_t^{{k}} = g_t^{{k}}(M^{{k}}_t) = &g_t^{{k}}(A^{{k}}_t,L^{[{{k}},{{k}}]}_t) \nonumber \\
= &{\psi}_t^{[{{k}},{{k}}]}(A_t^{{k}})(L^{[{{k}},{{k}}]}_t).
\end{align}
\end{proof}

Lemmas \ref{lem_psi_g_relation} and \ref{lem_psi_g_relation_inv} imply that the control action $U_t^{{k}}$ of every agent $k \in \mathcal{K}$ generated through a prescription strategy $\boldsymbol{\psi}^{{k}}$, can also be generated through an appropriate policy $\boldsymbol{g}$ and vice versa.

\begin{definition}
Given two agents $k,j \in \mathcal{K}$, a \textit{positional relationship} from agent ${{k}}$ to agent ${{j}}$ is given by the function
\begin{gather}
    e^{[{{j}},{{k}}]}: \Psi^{{k}} \longrightarrow \Psi^{{j}}.
\end{gather}
\end{definition}

Next we show the existence of a positional relationship $e^{[{{j}},{{k}}]}$ from any agent ${{k}} \in \mathcal{K}$ to every agent ${{j}} \in \mathcal{K}$ with desirable properties that allow us to construct optimal control policies of all agents from the optimal prescription strategy of just one agent. The following result establishes that using a positional relationship  $e^{[{{j}},{{k}}]}=(e_1^{[{{j}},{{k}}]},\ldots,e_T^{[{{j}},{{k}}]})$, an agent ${{j}}$ can derive the prescription strategy for agent ${{i}} \in \mathcal{K}$, when given the prescription strategy of agent ${{k}}$ for agent ${{i}}$, namely
\begin{gather}
   {\psi}_t^{[{{j}},{{i}}]} := e_t^{[{{j}},{{k}}]}\Big({\psi}_t^{[{{k}},{{i}}]}\Big), \; \forall {{i}} \in \mathcal{K}. \label{eq_e}
\end{gather}

\begin{lemma} \label{lem_psi_relation1}
Let agent $k \in \mathcal{K}$ and agent ${{j}} \in \mathcal{B}^{{k}}$. For any given prescription strategy $\boldsymbol{\psi}^{{k}}$ of agent ${{k}}$, there exists a positional relationship $e^{[{{j}},{{k}}]}$ such that a prescription strategy $\boldsymbol{\psi}^{{j}}$ of agent ${{j}}$ generated using \eqref{eq_e} yields:
\begin{align}
   \text{\emph{1. }} &{\Gamma}_t^{[{{k}},{{i}}]}(L_t^{[{{i}},{{i}}]}) = {\Gamma}_t^{[{{j}},{{i}}]}(L_t^{[{{i}},{{i}}]}),   \text{ \emph{if} } {{i}} \in \mathcal{B}^{{j}}, \nonumber \\
   \text{\emph{2. }} &{\Gamma}_t^{[{{k}},{{i}}]}(L_t^{[{{i}},{{i}}]}) = {\Gamma}_t^{[{{j}},{{i}}]}(L_t^{[{{i}},{{j}}]}),   \text{ \emph{if} } {{i}} \in \mathcal{B}^{{k}}, {{i}} \not\in \mathcal{B}^{{j}}, \nonumber \\
   \text{\emph{3. }} &{\Gamma}_t^{[{{k}},{{i}}]}(L_t^{[{{i}},{{k}}]}) = {\Gamma}_t^{[{{j}},{{i}}]}(L_t^{[{{i}},{{j}}]}),   \text{ \emph{if} } {{i}} \not\in \mathcal{B}^{{k}}.
    \label{lem_3_condition}
\end{align}
\end{lemma}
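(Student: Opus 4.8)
The plan is to construct the positional relationship $e^{[j,k]}$ explicitly and then verify the three claimed identities by unpacking the definitions of the prescription functions and the memory partitions. First I would fix an agent $k$ and an agent $j \in \mathcal{B}^k$, so that $j \geq k$ and hence by property \eqref{ainfo_prop_1} we have $A_t^j \subset A_t^k$. The key structural observation is that agent $j$ has strictly less accessible information than agent $k$; concretely, $A_t^k = \{A_t^j, (A_t^k \setminus A_t^j)\}$, where the extra block $A_t^k \setminus A_t^j$ is a set of random variables that agent $j$ does not have in its accessible information but which \emph{is} contained in its memory $M_t^j$ (this needs checking from \eqref{eq_mem} and \eqref{ainfo_def}, using that $A_t^j = \bigcap_{i=1}^j M_t^i \subseteq M_t^j$). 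I would therefore define $e_t^{[j,k]}$ so that, given $\psi_t^{[k,i]} : \mathcal{A}_t^{\bullet} \to \mathscr{G}_t^{[k,i]}$, the strategy $\psi_t^{[j,i]} := e_t^{[j,k]}(\psi_t^{[k,i]})$ is the map on $\mathcal{A}_t^j$ (resp.\ $\mathcal{A}_t^i$ when $i \in \mathcal{B}^j$) that takes $A_t^j$, appends the block $A_t^k \setminus A_t^j$ (which agent $j$ can read off from $M_t^j$, so this is a legitimate operation for a strategy of agent $j$), forms $A_t^k$, evaluates $\psi_t^{[k,i]}$ there, and then post-composes the resulting prescription function with the natural inclusion/restriction between the relevant $\mathcal{L}$-domains. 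Because the memory is finite and the topology known a priori (Assumptions \ref{top_asu}--\ref{func_asu}), every such map is well defined and belongs to $\Psi^j$.

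Once $e^{[j,k]}$ is in place, the three identities follow by a case analysis on where $i$ sits relative to $\mathcal{B}^k$ and $\mathcal{B}^j$, using the partition relation \eqref{eq_partition} and the definition \eqref{eq_gen_pres} of how prescription functions are generated. In case~1, $i \in \mathcal{B}^j \subseteq \mathcal{B}^k$, so both $\Gamma_t^{[k,i]}$ and $\Gamma_t^{[j,i]}$ are generated from $A_t^i$ (the second branch of \eqref{eq_gen_pres}), and $e_t^{[j,k]}$ is built precisely so that $\psi_t^{[j,i]}(A_t^i) = \psi_t^{[k,i]}(A_t^i)$; hence the two prescription functions are literally equal, and evaluating both at $L_t^{[i,i]}$ gives identity~1. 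In case~2, $i \in \mathcal{B}^k$ but $i \notin \mathcal{B}^j$ (so $k \le i < j$): now $\Gamma_t^{[k,i]}$ is generated from $A_t^i$ and has domain $\mathcal{L}_t^{[i,i]}$, while $\Gamma_t^{[j,i]}$ is generated from $A_t^j$ (first branch, since $i \notin \mathcal{B}^j$) and has domain $\mathcal{L}_t^{[i,j]} = M_t^i \setminus A_t^j$. The point is that $A_t^j \subset A_t^i$, so $L_t^{[i,j]} = \{L_t^{[i,i]}, (A_t^i \setminus A_t^j)\}$, and the extra coordinates $A_t^i \setminus A_t^j$ are exactly what agent $j$ supplies through $e_t^{[j,k]}$ before calling $\psi_t^{[k,i]}$; so $\Gamma_t^{[j,i]}$ evaluated at $L_t^{[i,j]}$ reconstructs $A_t^i$, calls the same underlying control policy, and returns $\Gamma_t^{[k,i]}(L_t^{[i,i]})$. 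Case~3, $i \notin \mathcal{B}^k$ (so $i < k \le j$), is handled the same way, now with both prescriptions generated from accessible information of the \emph{recipient}-side agents: $\Gamma_t^{[k,i]}$ has domain $\mathcal{L}_t^{[i,k]}$, $\Gamma_t^{[j,i]}$ has domain $\mathcal{L}_t^{[i,j]}$, and since $A_t^j \subset A_t^k$ we get $L_t^{[i,j]} \supseteq L_t^{[i,k]}$ with the difference again being coordinates agent $j$ can fill in from $M_t^j$. In every case the verification reduces to: (i) the two prescription functions are generated by prescription strategies related through $e_t^{[j,k]}$, and (ii) the two evaluation points are consistent under the restriction/inclusion maps that $e_t^{[j,k]}$ post-composes with.

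The main obstacle I anticipate is the bookkeeping of exactly which random variables lie in which of the sets $A_t^i$, $A_t^j$, $A_t^k$, $L_t^{[i,i]}$, $L_t^{[i,j]}$, $L_t^{[i,k]}$, and verifying the crucial containments $A_t^j \subset A_t^i \subset A_t^k$ (for $k \le i < j$) and $A_t^j \subset A_t^k$, together with the fact that the ``missing'' blocks $A_t^i \setminus A_t^j$ etc.\ are all measurable with respect to $M_t^j$ so that $e_t^{[j,k]}$ genuinely maps into $\Psi^j$ rather than requiring information agent $j$ does not possess. This hinges on \eqref{ainfo_prop_1} and the nested-intersection form \eqref{ainfo_def} of accessible information, so I would prove a short auxiliary claim at the start: for $k \le i \le j$ one has $A_t^j \subseteq A_t^i \subseteq A_t^k \subseteq M_t^j$, all differences being subsets of $M_t^j$. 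With that claim the rest is a careful but routine traversal of the definitions, and the three identities in \eqref{lem_3_condition} drop out by matching domains and evaluation points in each of the three cases.
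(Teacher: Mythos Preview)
Your overall strategy---repartition $M_t^i$ in two different ways and match the resulting compositions---is exactly what the paper does, and your case-by-case analysis in the second paragraph tracks the paper's three cases. The paper streamlines the argument by passing through an intermediate control policy $g_t^i$: from $\boldsymbol{\psi}^k$ it defines $g_t^i(M_t^i)=\psi_t^{[k,i]}(A_t^{\bullet})(L_t^{[i,\bullet]})$ for the appropriate partition of $M_t^i$, and then takes $\psi_t^{[j,i]}$ to be the map whose evaluation on the other partition of the \emph{same} $M_t^i$ equals $g_t^i(M_t^i)$. This avoids having to speak about ``filling in'' blocks at all.

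There is, however, a genuine error in how you describe the construction and in your auxiliary claim. You assert that $A_t^k\subseteq M_t^j$ (last containment of your chain $A_t^j\subseteq A_t^i\subseteq A_t^k\subseteq M_t^j$), and that agent $j$ can ``read off $A_t^k\setminus A_t^j$ from $M_t^j$.'' This is false in general: take $K=2$, $k=1$, $j=2$; then $A_t^1=M_t^1$, and $M_t^1$ contains $Y_t^1$, which is not in $M_t^2$ whenever $d^{[1,2]}\ge 1$. More to the point, $\psi_t^{[j,i]}$ is a map on $\mathcal{A}_t^j$ (or $\mathcal{A}_t^i$), so its output cannot depend on anything in $M_t^j$ beyond its argument; your step ``append $A_t^k\setminus A_t^j$ from $M_t^j$, then evaluate $\psi_t^{[k,i]}$'' is not a well-defined prescription strategy. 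The correct observation---which you do use implicitly in Case~2---is that the missing block lives in $M_t^i$, not $M_t^j$: since $A_t^k\subseteq M_t^i$ for $i\le k$ and $A_t^i\subseteq M_t^i$ always, the differences $A_t^k\setminus A_t^j$ (Case~3) and $A_t^i\setminus A_t^j$ (Case~2) are both contained in $L_t^{[i,j]}=M_t^i\setminus A_t^j$, and therefore enter as part of the \emph{argument} of the prescription function $\Gamma_t^{[j,i]}$, not as something agent $j$ supplies from its own memory. Once you replace $M_t^j$ by $M_t^i$ throughout and drop the false containment, your construction coincides with the paper's and the three identities follow.
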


\begin{proof}
Let $g_t^{{i}}$ denote the control policy of agent $i \in \mathcal{K}$ at time $t$. To prove the result, we construct $g_t^{{i}}$ and the prescription strategy ${\psi}_t^{{j}}$ for three cases, given a prescription strategy $\boldsymbol{\psi}^{{k}}$.

\begin{enumerate}
\item If ${{i}} \in \mathcal{B}^{{j}}$, the control policy $g_t^{{i}}$ can be constructed from the prescription strategy ${\psi}_t^{[{{k}},{{i}}]}$, namely,
\begin{align}
  g_t^{{i}}(A_t^{{i}},{L}_t^{[{{i}},{{i}}]}) &= {\psi}_t^{[{{k}},{{i}}]}(A_t^{{i}})({L}_t^{[{{i}},{{i}}]}).
\end{align}
From \eqref{eq_gen_pres} we have
\begin{gather}
    \Gamma_t^{[{{j}},{{i}}]} = \psi_t^{[{{j}},{{i}}]}(A_t^{{i}}), \; \forall {{i}} \in \mathcal{B}^{{j}},
\end{gather}
and thus,
\begin{gather}
    \psi_t^{[{{j}},{{i}}]}(A_t^{{i}})(L_t^{[{{i}},{{i}}]}) = g_t^{{i}}(A_t^{{i}},{L}_t^{[{{i}},{{i}}]}).
\end{gather}
Hence,
\begin{gather}
    {\psi}_t^{[{{k}},{{i}}]}(A_t^{{i}})({L}_t^{[{{i}},{{i}}]}) = \psi_t^{[{{j}},{{i}}]}(A_t^{{i}})(L_t^{[{{i}},{{i}}]}). \label{eq_e_1}
\end{gather}

\item If ${{i}}\in \mathcal{B}^{{k}}$ and ${{i}} \not \in \mathcal{B}^{{j}}$, the control policy $g_t^{{i}}$ can be constructed by the prescription strategy ${\psi}_t^{[{{k}},{{i}}]},$ namely,
\begin{align}
   g_t^{{i}}(A_t^{{i}},{L}_t^{[{{i}},{{i}}]}) = {\psi}_t^{[{{k}},{{i}}]}(A_t^{{i}})({L}_t^{[{{i}},{{i}}]}).
\end{align}
From \eqref{eq_gen_pres} we have
\begin{gather}
    \Gamma_t^{[{{j}},{{i}}]} = \psi_t^{[{{j}},{{i}}]}(A_t^{{j}}), \; \forall {{i}} \not\in \mathcal{B}^{{j}}.
\end{gather}
Thus,
\begin{gather}
    {\psi}_t^{[{{j}},{{i}}]}(A_t^{{j}})({L}_t^{[{{i}},{{j}}]}) = g_t^{{i}}(A_t^{{j}},{L}_t^{[{{i}},{{j}}]}) = g_t^{{i}}(A_t^{{i}},{L}_t^{[{{i}},{{i}}]}).
\end{gather}
Hence,
\begin{align}
    {\psi}_t^{[{{k}},{{i}}]}(A_t^{{i}})({L}_t^{[{{i}},{{i}}]})
    ={\psi}_t^{[{{j}},{{i}}]}(A_t^{{j}})({L}_t^{[{{i}},{{j}}]}). \label{eq_e_2}
\end{align}

\item If ${{i}}\not\in\mathcal{B}^{{k}}$, the control policy $g_t^{{i}}$ can be constructed by the prescription strategy ${\psi}_t^{[{{k}},{{i}}]},$ namely,
\begin{align}
    g_t^{{i}}(A_t^{{k}},{L}_t^{[{{i}},{{k}}]}) =
   {\psi}_t^{[{{k}},{{i}}]}(A_t^{{k}})({L}_t^{[{{i}},{{k}}]}).
\end{align}
From \eqref{eq_gen_pres} we have
\begin{gather}
    \Gamma_t^{[{{j}},{{i}}]} = \psi_t^{[{{j}},{{i}}]}(A_t^{{j}}), \; \forall {{i}} \not\in \mathcal{B}^{{j}}.
\end{gather}
Thus,
\begin{align}
    {\psi}_t^{[{{j}},{{i}}]}(A_t^{{j}})({L}_t^{[{{i}},{{j}}]}) &= g_t^{{i}}(A_t^{{j}},{L}_t^{[{{i}},{{j}}]}) \nonumber \\
    &= g_t^{{i}}(A_t^{{k}},{L}_t^{[{{i}},{{k}}]}).
\end{align}
Hence,
\begin{align}
    {\psi}_t^{[{{k}},{{i}}]}(A_t^{{k}})({L}_t^{[{{i}},{{k}}]})=
    {\psi}_t^{[{{j}},{{i}}]}(A_t^{{j}})({L}_t^{[{{i}},{{j}}]}). \label{eq_e_3}
\end{align}
\end{enumerate}

To complete the proof, note that we can define a positional relationship $e^{[{{j}},{{k}}]} : \Psi^{{k}} \rightarrow \Psi^{{j}}$ with $e^{[{{j}},{{k}}]}=(e_1^{[{{j}},{{k}}]},\ldots,e_T^{[{{j}},{{k}}]})$ such that \eqref{eq_e} implies  \eqref{eq_e_1}, \eqref{eq_e_2} and \eqref{eq_e_3} with ${{j}} \in \mathcal{B}^{{k}}$.
\end{proof}

\begin{lemma} \label{lem_psi_relation2}
Let agents $k, j\in \mathcal{K}$ with ${{j}} \not\in \mathcal{B}^{{k}}$. For any given prescription strategy $\boldsymbol{\psi}^{{k}}$ of agent ${{k}}$, there exists a positional relationship $e^{[{{j}},{{k}}]}$ such that a prescription strategy $\boldsymbol{\psi}^{{j}}$ of agent ${{j}}$ generated from \eqref{eq_e} yields:
\begin{align}
    \text{\emph{1. }} &{\Gamma}_t^{[{{k}},{{i}}]}(L_t^{[{{i}},{{i}}]}) = {\Gamma}_t^{[{{j}},{{i}}]}(L_t^{[{{i}},{{i}}]}),   \text{ \emph{if} } {{i}} \in \mathcal{B}^{{k}}, \nonumber \\
    \text{\emph{2. }} &{\Gamma}_t^{[{{k}},{{i}}]}(L_t^{[{{i}},{{k}}]}) = {\Gamma}_t^{[{{j}},{{i}}]}(L_t^{[{{i}},{{i}}]}),   \text{ \emph{if} } {{i}} \in \mathcal{B}^{{j}}, {{i}} \not\in \mathcal{B}^{{k}}, \nonumber \\
    \text{\emph{3. }} &{\Gamma}_t^{[{{k}},{{i}}]}(L_t^{[{{i}},{{k}}]}) = {\Gamma}_t^{[{{j}},{{i}}]}(L_t^{[{{i}},{{j}}]}),   \text{ \emph{if} } {{i}} \not\in \mathcal{B}^{{j}}.
    \label{lem_4_condition}
\end{align}
\end{lemma}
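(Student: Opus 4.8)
The plan is to mirror the proof of Lemma \ref{lem_psi_relation1}, exploiting the fact that $j \not\in \mathcal{B}^{{k}}$ forces $j < k$ and hence $\mathcal{B}^{{k}} \subseteq \mathcal{B}^{{j}}$. This inclusion partitions the agents $i \in \mathcal{K}$ into exactly the three cases listed in \eqref{lem_4_condition}: (i) $i \in \mathcal{B}^{{k}}$ (so also $i \in \mathcal{B}^{{j}}$), (ii) $i \in \mathcal{B}^{{j}} \setminus \mathcal{B}^{{k}}$, and (iii) $i \not\in \mathcal{B}^{{j}}$ (so also $i \not\in \mathcal{B}^{{k}}$). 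For each case I would, given $\boldsymbol{\psi}^{{k}}$, first build a control policy $g_t^{{i}}$ for agent $i$ as a function of $M_t^{{i}}$ using the appropriate component of $\boldsymbol{\psi}^{{k}}$, then re-express that same function under the memory partition relevant to agent $j$, and finally read off $\psi_t^{[{{j}},{{i}}]}$ via \eqref{eq_gen_pres}; the induced map $\psi_t^{[{{k}},{{i}}]} \mapsto \psi_t^{[{{j}},{{i}}]}$ is the $i$-component of $e_t^{[{{j}},{{k}}]}$, and \eqref{eq_e} then glues these together.

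In case (i), both $\Gamma_t^{[{{k}},{{i}}]}$ and $\Gamma_t^{[{{j}},{{i}}]}$ are generated from $A_t^{{i}}$ by \eqref{eq_gen_pres} and share the input domain $\mathcal{L}_t^{[{{i}},{{i}}]}$, so $e_t^{[{{j}},{{k}}]}$ can act as the identity on this component, yielding condition~1. In cases (ii) and (iii) the key facts are the inclusions among accessible-information sets coming from \eqref{ainfo_prop_1} — in particular $A_t^{{k}} \subseteq A_t^{{i}}$ in case (ii), and $A_t^{{k}} \subseteq A_t^{{j}} \subseteq A_t^{{i}}$ in case (iii) — together with the partition identities, all instances of \eqref{eq_partition}: $M_t^{{i}} = \{A_t^{{k}},L_t^{[{{i}},{{k}}]}\} = \{A_t^{{i}},L_t^{[{{i}},{{i}}]}\}$ in case (ii), and $M_t^{{i}} = \{A_t^{{k}},L_t^{[{{i}},{{k}}]}\} = \{A_t^{{j}},L_t^{[{{i}},{{j}}]}\}$ in case (iii). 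These let me set, e.g.\ in case (iii), $g_t^{{i}}(A_t^{{k}},L_t^{[{{i}},{{k}}]}) := \psi_t^{[{{k}},{{i}}]}(A_t^{{k}})(L_t^{[{{i}},{{k}}]})$, recognize the right-hand side as a function of $M_t^{{i}}$ and hence also of $(A_t^{{j}},L_t^{[{{i}},{{j}}]})$, and then define $\psi_t^{[{{j}},{{i}}]}(A_t^{{j}})(L_t^{[{{i}},{{j}}]})$ to equal it; since $A_t^{{k}} \subseteq A_t^{{j}}$ this is a legitimate element of $\Psi^{{j}}$. Evaluating the two prescriptions then gives conditions~2 and~3, and an analogous (slightly shorter) argument handles case (ii).

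The last step is to assemble $e^{[{{j}},{{k}}]} = (e_1^{[{{j}},{{k}}]},\ldots,e_T^{[{{j}},{{k}}]})$ from the per-time, per-agent constructions and verify that \eqref{eq_e} reproduces all three equalities simultaneously. I expect the main obstacle to be bookkeeping rather than anything conceptual: one must check with care that every constructed $g_t^{{i}}$ is a function of $M_t^{{i}}$ \emph{alone}, and that each resulting $\psi_t^{[{{j}},{{i}}]}$ has exactly the domain that \eqref{eq_gen_pres} prescribes for agent $j$ — i.e.\ that the required inclusions among $A_t^{{k}}$, $A_t^{{j}}$, $A_t^{{i}}$, $M_t^{{i}}$ and the various $L_t^{[\cdot,\cdot]}$ indeed all hold in the regimes $i < j < k$ and $j \le i$, $j<k$. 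Once these inclusions are established, the claimed equalities follow by unfolding the definitions of the prescription functions exactly as in the proof of Lemma \ref{lem_psi_relation1}.
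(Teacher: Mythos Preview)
Your proposal is correct and follows essentially the same approach as the paper, which explicitly states that the proof is very similar to that of Lemma~\ref{lem_psi_relation1} and omits the details. Your three-case decomposition, the construction of $g_t^{{i}}$ from $\boldsymbol{\psi}^{{k}}$ followed by re-partitioning $M_t^{{i}}$ relative to agent $j$, and the assembly of $e^{[{{j}},{{k}}]}$ all mirror the argument of Lemma~\ref{lem_psi_relation1} exactly as intended; the inclusions you identify (e.g.\ $A_t^{{k}} \subseteq A_t^{{j}} \subseteq A_t^{{i}}$ in case (iii)) are precisely the ones needed and follow from \eqref{ainfo_prop_1} and Definition~\ref{def:setB}.
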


\begin{proof}
The proof is very similar to the proof of Lemma \ref{lem_psi_relation1}. It is omitted due to space limitations.
\end{proof}

To this end, we consider a positional relationship function $e^{[{{j}},{{k}}]}$ from every ${{k}} \in \mathcal{K}$ to every position ${{j}} \in \mathcal{K}$ which satisfies the properties in Lemmas \ref{lem_psi_relation1} and \ref{lem_psi_relation2}. This implies that for any two agents ${{k}}$ and ${{j}}$, we have the relation,
\begin{gather}
    U_t^{{j}} = {\Gamma}_t^{[{{j}},{{j}}]}(L_t^{[{{j}},{{j}}]}) =
    \begin{cases}
        {\Gamma}_t^{[{{k}},{{j}}]}(L_t^{[{{j}},{{k}}]}),   \text{ if } {{j}} \not\in \mathcal{B}^{{k}}, \\
        {\Gamma}_t^{[{{k}},{{j}}]}(L_t^{[{{j}},{{j}}]}),   \text{ if } {{j}} \in \mathcal{B}^{{k}}.
    \end{cases}
    \label{eq_u_with_another_psi}
\end{gather}

\section{RESULTS}

\subsection{Equivalent Prescription Problems}

Lemmas \ref{lem_psi_g_relation} through \ref{lem_psi_relation2} lead to \eqref{eq_u_with_another_psi}. This implies that the control action $U_t^j$ for agent $j \in \mathcal{K}$ can be equivalently obtained through the prescription function $ {\Gamma}_t^{[{{k}},{{j}}]}$ of any other agent ${{k}} \in \mathcal{K}$, if the corresponding inaccessible information is available.
Using \eqref{eq_u_with_another_psi}, we can write the cost incurred by the system at time $t$ as
\begin{align}
    c_t(X_t,U_t^1,\ldots,&U_t^K) \nonumber \\
    =: c_t\big(&X_t,{\Gamma}_t^{[{{k}},1]}({L}_t^{[1,{{k}}]}),\ldots,{\Gamma}_t^{[{{k}},{{k}}]}({L}_t^{[{{k}},{{k}}]}), \nonumber \\
    &{\Gamma}_t^{[{{k}},{{k+1}}]}({L}_t^{[{{k+1}},{{k+1}}]}),\ldots,{\Gamma}_t^{[{{k}},K]}({L}_t^{[{{K}},K]})\big). \label{o_cost}
\end{align}

We can then reformulate Problem 1 in terms of the prescription strategy of any agent ${{k}}$. The optimization problem is to select the optimal prescription strategy $\boldsymbol{\psi}^{{*k}} \in \Psi^{{k}}$ that minimizes the performance criterion given by the total expected cost:
\begin{multline}
    \textbf{Problem 2:}~~~\mathcal{J}^{{k}}(\boldsymbol{\psi}^{{k}}) =  \\
    \mathbb{E}^{\boldsymbol{\psi}^{{k}}} \Big[\sum_{t=0}^T{c_t\big(X_t,{\Gamma}_t^{[{{k}},1]}({L}_t^{[1,{{k}}]}),\ldots,{\Gamma}_t^{[{{k}},{{k}}]}({L}_t^{[{{k}},{{k}}]})},  \\
    {\Gamma}_t^{[{{k}},{{k+1}}]}({L}_t^{[{{k+1}},{{k+1}}]}),\ldots,{\Gamma}_t^{[{{k}},K]}({L}_t^{[{{K}},K]})\big)\Big]. \label{per_cri_2}
\end{multline}

The task of deriving optimal prescription strategy $\boldsymbol{\psi}^{{*k}},$ and subsequently, the complete prescription  $\Theta_t^{{k}}$ for agent ${{k}}$ is assigned to a fictitious designer that can only access memory $M_t^{{k}}$. Note that this maintains the decentralized nature of the problem as the strategies are implemented by the agents in real time with asymmetric and incomplete information. Now, we show the equivalence between the two problems.

\begin{lemma} \label{lem_equivalence}
For any agent ${{k}} \in \mathcal{K}$, Problem 2 is equivalent to Problem 1.
\end{lemma}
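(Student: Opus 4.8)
The plan is to establish \emph{equivalence} in the strong sense that the two problems attain the same optimal value and that an optimizer of one is explicitly convertible into an optimizer of the other; this follows once I exhibit a value-preserving correspondence, in both directions, between feasible control policies $\boldsymbol{g}\in G$ of Problem~1 and feasible prescription strategies $\boldsymbol{\psi}^{k}\in\Psi^{k}$ of Problem~2.

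\emph{From $G$ to $\Psi^{k}$.} Given $\boldsymbol{g}\in G$, I would build $\boldsymbol{\psi}^{k}$ one component at a time, defining, for each agent $j\in\mathcal{K}$ and each $t$, the induced prescription by ``currying'' $g_t^{j}$ on its inaccessible argument: $\Gamma_t^{[k,j]}(\cdot):=g_t^{j}(A_t^{j},\cdot)$ if $j\in\mathcal{B}^{k}$ and $\Gamma_t^{[k,j]}(\cdot):=g_t^{j}(A_t^{k},\cdot)$ if $j\notin\mathcal{B}^{k}$, that is, $\psi_t^{[k,j]}$ is the map $A_t^{j}\mapsto g_t^{j}(A_t^{j},\cdot)$ (respectively $A_t^{k}\mapsto g_t^{j}(A_t^{k},\cdot)$). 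Each component is admissible as a prescription strategy of agent $k$ because $A_t^{j}\subseteq A_t^{k}\subseteq M_t^{k}$ by \eqref{ainfo_prop_1} and Definition~\ref{def_A}; and, by the partition identity \eqref{eq_partition}, feeding $\Gamma_t^{[k,j]}$ the corresponding inaccessible information exactly as in \eqref{eq_u_with_another_psi} returns $g_t^{j}(M_t^{j})=U_t^{j}$. This is essentially Lemma~\ref{lem_psi_g_relation} carried out across all agents by means of the positional bookkeeping of Lemmas~\ref{lem_psi_relation1}--\ref{lem_psi_relation2}.

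\emph{From $\Psi^{k}$ to $G$.} Conversely, given $\boldsymbol{\psi}^{k}\in\Psi^{k}$ with complete prescription $\Theta_t^{k}=(\Gamma_t^{[k,1]},\ldots,\Gamma_t^{[k,K]})$, Lemma~\ref{lem_psi_g_relation_inv} already yields $g_t^{k}$ reproducing $U_t^{k}$, and for every other agent $j$ I would set $g_t^{j}(M_t^{j}):=\Gamma_t^{[k,j]}(L_t^{[j,j]})$ if $j\in\mathcal{B}^{k}$ and $g_t^{j}(M_t^{j}):=\Gamma_t^{[k,j]}(L_t^{[j,k]})$ if $j\notin\mathcal{B}^{k}$. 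This is a genuine function of $M_t^{j}$: the prescription $\Gamma_t^{[k,j]}$ is generated from $A_t^{j}$ or $A_t^{k}$, each of which is a function of $M_t^{j}$ (since $A_t^{k}\subseteq A_t^{j}\subseteq M_t^{j}$ whenever $j\le k$, and $A_t^{j}\subseteq M_t^{j}$ always), while the inaccessible arguments are functions of $M_t^{j}$ by \eqref{eq_partition}; and \eqref{eq_u_with_another_psi} again guarantees $g_t^{j}(M_t^{j})=U_t^{j}$.

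\emph{Value preservation and conclusion.} In either direction the constructed pair produces, along every sample path and with probability one, the same control vector $U_t^{1:K}$ at each $t$; since the primitive random variables are fixed, the state trajectory from \eqref{st_eq} and hence the joint law of $\{(X_t,U_t^{1:K})\}_{t=0}^{T}$ coincide, so that, using the rewriting \eqref{o_cost} of the stage cost, $\mathcal{J}(\boldsymbol{g})=\mathcal{J}^{k}(\boldsymbol{\psi}^{k})$ for the corresponding pair. Taking infima gives $\inf_{\boldsymbol{g}\in G}\mathcal{J}(\boldsymbol{g})=\inf_{\boldsymbol{\psi}^{k}\in\Psi^{k}}\mathcal{J}^{k}(\boldsymbol{\psi}^{k})$, and an optimal $\boldsymbol{\psi}^{*k}$ maps to an optimal $\boldsymbol{g}^{*}$, and conversely, under the two constructions, which is the asserted equivalence. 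The main obstacle I expect is not analytic but a matter of careful bookkeeping: one must verify the nestings $A_t^{j}\subseteq A_t^{k}\subseteq M_t^{k}$ together with the partitions \eqref{eq_partition} so that every object introduced is measurable with respect to the information actually available to whoever uses it, and that the constructed policies and prescription strategies stay in the feasible sets $G$ and $\Psi^{k}$; once that is pinned down, the value-preservation step is immediate from \eqref{eq_u_with_another_psi} and \eqref{o_cost}.
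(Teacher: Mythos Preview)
Your proposal is correct and follows essentially the same approach as the paper. The paper's proof is much terser: it simply observes that \eqref{o_cost} makes the two performance criteria coincide for corresponding pairs $(\boldsymbol{g},\boldsymbol{\psi}^{k})$ and then cites Lemmas~\ref{lem_psi_g_relation} and~\ref{lem_psi_g_relation_inv} for the correspondence; you have spelled out both directions of that correspondence explicitly, together with the measurability check that the constructed $g_t^{j}$ is a genuine function of $M_t^{j}$, but the underlying argument is the same.
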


\begin{proof}
Eq. \eqref{o_cost} implies that the performance criterion $\mathcal{J}^{{k}}(\boldsymbol{\psi}^{{k}})$ in \eqref{per_cri_2} is equal to the performance criterion $\mathcal{J}(\boldsymbol{g})$ in \eqref{per_cri}. Thus, given the optimal prescription strategy $\boldsymbol{\psi}^{{*k}}$, from Lemmas \ref{lem_psi_g_relation} and \ref{lem_psi_g_relation_inv}, we can derive the optimal policy $\boldsymbol{g}^*$ in Problem 1. 
\end{proof}

Next, we present a state sufficient for input-output mapping in Problem 2 for agent ${{k}}$ following the exposition presented in \cite{mahajan2008sequential}.

\begin{lemma} \label{state_suff_k}
A state sufficient for input-output mapping for agent ${{k}} \in \mathcal{K}$ is
\begin{equation}
    S^{{k}}_t := \left\{X_{t}, L_t^{[1,{{k}}]},\ldots,L_t^{[{{k-1}},{{k}}]},L_t^{[{{k}},{{k}}]},\ldots,L_t^{[{{K}},K]}\right\}. \label{S_k}
\end{equation}
\end{lemma}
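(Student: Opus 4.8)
The plan is to verify, for the reformulated Problem 2 associated with agent $k$, the two defining conditions of a state sufficient for input--output mapping in the sense of \cite{mahajan2008sequential}, treating the complete prescription $\Theta_t^{k}=(\Gamma_t^{[k,1]},\dots,\Gamma_t^{[k,K]})$ as the control input at time $t$ and the instantaneous cost $c_t$ as the output: namely \emph{(a)} that $c_t$ is a deterministic function of $(S_t^{k},\Theta_t^{k})$, and \emph{(b)} that $S_{t+1}^{k}$ is a deterministic function of $(S_t^{k},\Theta_t^{k})$ together with the primitive random variables entering the system during the transition from $t$ to $t+1$, with this function independent of the prescription strategy $\boldsymbol{\psi}^{k}$.

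Condition \emph{(a)} is immediate from \eqref{o_cost}: there $c_t$ is exhibited as a function of $X_t$ and of the evaluated prescriptions $\Gamma_t^{[k,j]}(L_t^{[j,k]})$ for $j\notin\mathcal B^{k}$ and $\Gamma_t^{[k,j]}(L_t^{[j,j]})$ for $j\in\mathcal B^{k}$, and every argument appearing there is either $X_t$, a component of $S_t^{k}$, or a component of $\Theta_t^{k}$.

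For condition \emph{(b)} I would treat $S_{t+1}^{k}$ component by component. The state $X_{t+1}$ is the easy part: by \eqref{eq_u_with_another_psi} each control $U_t^{j}$ is obtained by evaluating the component $\Gamma_t^{[k,j]}$ of $\Theta_t^{k}$ at an inaccessible-information set that is a component of $S_t^{k}$, so $U_t^{1:K}$ is a function of $(S_t^{k},\Theta_t^{k})$, and then the state equation $X_{t+1}=f_t(X_t,U_t^{1:K},W_t)$ expresses $X_{t+1}$ in terms of $(S_t^{k},\Theta_t^{k},W_t)$. For each inaccessible-information component, I would exploit the explicit descriptions \eqref{eq_mem}, \eqref{ainfo_def}, \eqref{inacc_def} of memory, accessible information, and inaccessible information: by Assumption \ref{top_asu} (fixed, known topology and path delays) these are deterministically specified collections of indexed observations and control actions. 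The one-step memory update is $M_{t+1}^{j}=M_t^{j}\cup\{Y^i_{t+1-d^{[i,j]}},U^i_{t-d^{[i,j]}}:i\in\mathcal K\}$, and combined with the monotonicity \eqref{ainfo_prop_2} of accessible information this shows that each $L_{t+1}^{[\,\cdot,\,\cdot]}$ is obtained from $S_t^{k}$ by (i) discarding a deterministically prescribed sub-collection of the entries of the corresponding $L_t^{[\,\cdot,\,\cdot]}$ (those just absorbed into accessible information), and (ii) adjoining freshly delivered entries --- which are an agent's own new observation $Y_{t+1}^{j}=h_{t+1}^{j}(X_{t+1},V_{t+1}^{j})$, a function of the already reconstructed $X_{t+1}$ and of the primitive $V_{t+1}^{j}$; new control actions $U_t^{i}$, functions of $(S_t^{k},\Theta_t^{k})$ as above; or older items forwarded from distant agents, which by the nesting of accessible-information sets are shown to already appear among the entries recorded in some component of $S_t^{k}$. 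Assembling these pieces yields the update map $S_{t+1}^{k}=\hat f_t(S_t^{k},\Theta_t^{k},W_t,V_{t+1}^{1:K})$, and Assumption \ref{prim_asu} provides the required independence of the primitive noise.

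I expect step \emph{(b)(ii)} to be the main obstacle: accounting precisely for every observation and control variable that is newly delivered along the information paths at time $t+1$ and verifying that each one is recoverable --- either as a component of $S_t^{k}$ (via the delay bookkeeping together with the nesting relations $A_t^{j}\subseteq A_t^{k}$ for $j\in\mathcal B^{k}$ and $A_{t-1}^{k}\subseteq A_t^{k}$), or as a function of $X_{t+1}$ and the fresh measurement noise, or as a function of $\Theta_t^{k}$. Everything else reduces directly to the definitions and to \eqref{eq_u_with_another_psi}.
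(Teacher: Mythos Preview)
Your argument tracks the paper's approach closely for two of the three properties that the paper actually checks, but you have misidentified the ``output'' of the reformulated system and thereby omitted one required condition. In the paper's proof the state $S_t^{k}$ is shown to satisfy Witsenhausen's three properties for a state sufficient for input--output mapping: a state-update equation $S_{t+1}^{k}=\hat f_t^{k}(S_t^{k},W_t,V_{t+1}^{1:K},\Theta_t^{k})$, an \emph{observation} equation $Z_{t+1}^{k}=\hat h_t^{k}(S_t^{k},\Theta_t^{k},V_{t+1}^{1:K})$, and a cost equation $c_t(X_t,U_t^{1:K})=\hat c_t^{k}(S_t^{k},\Theta_t^{k})$. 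Your conditions (a) and (b) are precisely the cost and state-update equations, and your treatment of them is in line with (and more detailed than) what the paper sketches. But the ``output'' in this framework is not the instantaneous cost; it is the new accessible information $Z_{t+1}^{k}=A_{t+1}^{k}\setminus A_t^{k}$, which plays the role of the designer's observation in the POMDP reformulation of Section~IV-B (recall $Z_{0:t}^{k}=A_t^{k}$). Without the observation equation you cannot justify treating $\Pi_t^{k}=\mathbb P(S_t^{k}\mid A_t^{k},\Theta_{0:t-1}^{k})$ as a bona fide POMDP belief, so this is not a cosmetic omission.

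The fix is short and uses the same bookkeeping you already set up for (b): by Assumption~\ref{top_asu} the index set of $Z_{t+1}^{k}$ is deterministic, and each entry of $Z_{t+1}^{k}$ is either an observation $Y_{t+1-d}^{i}$ or a control $U_{t-d}^{i}$ that, at time $t$, was recorded in one of the inaccessible-information components of $S_t^{k}$ (or is freshly generated from $X_{t+1}$ and $V_{t+1}^{1:K}$, hence from $S_t^{k},\Theta_t^{k},W_t,V_{t+1}^{1:K}$). In other words, the items you describe as ``discarded'' from the $L$-components in step (b)(i) are exactly the items that constitute $Z_{t+1}^{k}$, so your own argument already contains what is needed once you recognise $Z_{t+1}^{k}$, not $c_t$, as the output.
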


\begin{proof}
The state $S^{{k}}_t$ satisfies the three properties stated by Witsenhausen \cite{witsenhausen1976some}:

1) There exist functions $\{\hat{f}^{{k}}_t$: $t \in \mathcal{T}\}$ such that
  \begin{equation}
	S^{{k}}_{t+1} = \hat{f}^{{k}}_{t}(S^{{k}}_t, W_t, V_{t+1}^{1:K}, \Theta_t^{{k}}). \label{eq_St_k1}
  \end{equation}
  
2) There exist functions $\{\hat{h}^{{k}}_t$: $t \in \mathcal{T}\}$ such that
    \begin{equation}
	Z^{{k}}_{t+1} = \hat{h}^{{k}}_{t}(S^{{k}}_{t},\Theta_t^{{k}},V_{t+1}^{1:K}). \label{eq_St_k2}
  \end{equation}
  
3) There exist functions $\{\hat{c}^{{k}}_t$: $t \in \mathcal{T}\}$ such that
      \begin{align}
	c_t(X_t,U_t^{1:K}) &= \hat{c}^{{k}}_t(S^{{k}}_t,\Theta^{{k}}_t). \label{eq_St_k3}
  \end{align}
The three equations above can each be verified by substitution of variables on the LHS. The complete proof can be found in \cite{2019Aditya_arXiv}.
\end{proof}

\subsection{The Information States}

From the designer's point of view, the system behaves as a Partially Observed Markov Decision Process (POMDP) with state $S_t^{{k}}$, control input $\Theta_t^{{k}}$, output $Z_t^{{k}}$ (with $Z^{{k}}_{0:t} = A^{{k}}_t$) and cost $\hat{c}^{{k}}_t(S^{{k}}_t,\Theta^{{k}}_t)$ at time $t$.
The difference is that the prescription functions $\Gamma_t^{[{{k}},{{j}}]}$, ${{j}} \in \mathcal{B}^{{k}},$ are generated as functions of the accessible information $A_t^{{j}}$ instead of $A_t^{{k}}$. Thus, structural results for POMDPs cannot be directly applied to Problem 2. Before proceeding to structural results, we define the \textit{information state} for agent ${{k}}$.

\begin{definition}
Let $S_t^{{k}}$ be the state, $A_{t}^{{k}}$ the accessible information, and $\Theta^{{k}}_{0:t-1}$  the control inputs at time $t$ an agent ${{k}} \in \mathcal{K}$. The \textit{information state} is defined as a probability distribution $\Pi^{{k}}_t$ that takes values in the possible realizations $\mathscr{P}^{{k}}_t := \Delta(\mathcal{S}^{{k}}_t)$ such that,
\begin{equation}
\Pi^{{k}}_t(s^{{k}}_t) := \mathbb{P}^{\boldsymbol{\psi}^{{k}}}(S^{{k}}_t = s^{{k}}_t \big|A^{{k}}_t, \Theta^{{k}}_{0:t-1}).
\end{equation}
\end{definition}

Due to space limitation, the proofs of the following three properties are omitted but can be found in \cite{2019Aditya_arXiv}.
The first property establishes that the information state $\Pi_t^{{k}}$ is independent from the prescription strategy $\boldsymbol{\psi}^{{k}}$. 

\begin{lemma} \label{pi_k_1} 
At time $t$, there exists a function $F_t^{{k}}$ independent from the prescription strategy $\boldsymbol{\psi}^{{k}}$ such that
\begin{equation}
    \Pi_{t+1}^{{k}} = F_{t+1}^{{k}}(\Pi_t^{{k}},\Theta_t^{{k}},Z_{t+1}^{{k}}).
\end{equation}
\end{lemma}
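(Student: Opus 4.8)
The plan is to derive this recursion by the standard nonlinear-filtering (belief-update) argument for a POMDP, taking care to verify at each step that the dependence on the prescription strategy $\boldsymbol{\psi}^{{k}}$ drops out. Throughout, I use that $A_{t+1}^{{k}} = \{A_t^{{k}}, Z_{t+1}^{{k}}\}$, which follows from \eqref{Z_def} (equivalently, from $Z^{{k}}_{0:t} = A^{{k}}_t$), so conditioning on $(A_{t+1}^{{k}}, \Theta_{0:t}^{{k}})$ is the same as conditioning on $(A_t^{{k}}, \Theta_{0:t-1}^{{k}}, \Theta_t^{{k}}, Z_{t+1}^{{k}})$. I also record a preliminary fact: for a fixed $\boldsymbol{\psi}^{{k}}$, each component $\Gamma_t^{[{{k}},{{j}}]}$ of $\Theta_t^{{k}}$ is generated from either $A_t^{{k}}$ or, when ${{j}} \in \mathcal{B}^{{k}}$, from $A_t^{{j}} \subseteq A_t^{{k}}$ by \eqref{ainfo_prop_1}; hence $\Theta_t^{{k}}$ is a deterministic function of $A_t^{{k}}$, so appending $\Theta_t^{{k}}$ to the conditioning is redundant and $\mathbb{P}^{\boldsymbol{\psi}^{{k}}}(S_t^{{k}} = s_t^{{k}} \mid A_t^{{k}}, \Theta_{0:t}^{{k}}) = \Pi_t^{{k}}(s_t^{{k}})$.

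The core step is a Bayes' rule computation. I write
\begin{align*}
\Pi_{t+1}^{{k}}(s_{t+1}^{{k}}) = \frac{\mathbb{P}^{\boldsymbol{\psi}^{{k}}}\!\big(S_{t+1}^{{k}} = s_{t+1}^{{k}}, Z_{t+1}^{{k}} \,\big|\, A_t^{{k}}, \Theta_{0:t}^{{k}}\big)}{\sum_{\tilde{s}} \mathbb{P}^{\boldsymbol{\psi}^{{k}}}\!\big(S_{t+1}^{{k}} = \tilde{s}, Z_{t+1}^{{k}} \,\big|\, A_t^{{k}}, \Theta_{0:t}^{{k}}\big)},
\end{align*}
and expand the numerator by the law of total probability over $S_t^{{k}} = s_t^{{k}} \in \mathcal{S}_t^{{k}}$. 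By Lemma \ref{state_suff_k}, both $S_{t+1}^{{k}}$ (via \eqref{eq_St_k1}) and $Z_{t+1}^{{k}}$ (via \eqref{eq_St_k2}) are deterministic functions of $(S_t^{{k}}, W_t, V_{t+1}^{1:K}, \Theta_t^{{k}})$. The crucial observation is that $S_t^{{k}}$, $A_t^{{k}}$, and $\Theta_{0:t}^{{k}}$ are all functions of the primitive variables $(X_0, W_{0:t-1}, V_{0:t}^{1:K})$ together with the fixed strategy, whereas by Assumption \ref{prim_asu} the fresh noise $(W_t, V_{t+1}^{1:K})$ is independent of $(X_0, W_{0:t-1}, V_{0:t}^{1:K})$; hence $(W_t, V_{t+1}^{1:K})$ is independent of the conditioning variables. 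Summing over the finitely many realizations of $(W_t, V_{t+1}^{1:K})$, weighted by their known product distribution, produces a transition--observation kernel
\begin{multline*}
P_t^{{k}}(s_{t+1}^{{k}}, z_{t+1}^{{k}} \mid s_t^{{k}}, \theta_t^{{k}}) := \sum_{w_t, v_{t+1}^{1:K}} \mathbbm{1}\!\big\{\hat{f}_t^{{k}}(s_t^{{k}}, w_t, v_{t+1}^{1:K}, \theta_t^{{k}}) = s_{t+1}^{{k}}\big\} \\
\times\, \mathbbm{1}\!\big\{\hat{h}_t^{{k}}(s_t^{{k}}, \theta_t^{{k}}, v_{t+1}^{1:K}) = z_{t+1}^{{k}}\big\}\, \mathbb{P}(W_t = w_t)\prod_{i=1}^{K} \mathbb{P}(V_{t+1}^{{i}} = v_{t+1}^{{i}}),
\end{multline*}
which manifestly does not depend on $\boldsymbol{\psi}^{{k}}$.

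Combining these, the numerator equals $\sum_{s_t^{{k}}} P_t^{{k}}\big(s_{t+1}^{{k}}, Z_{t+1}^{{k}} \mid s_t^{{k}}, \Theta_t^{{k}}\big)\, \Pi_t^{{k}}(s_t^{{k}})$ and the denominator is this same expression summed over all realizations of $S_{t+1}^{{k}}$; hence $\Pi_{t+1}^{{k}}$ depends on the conditioning only through $\Pi_t^{{k}}$, $\Theta_t^{{k}}$, and $Z_{t+1}^{{k}}$, and I take $F_{t+1}^{{k}}$ to be precisely this map (fixing an arbitrary convention on the probability-zero event where the denominator vanishes). I expect the main obstacle to be the bookkeeping in the independence step: one must argue carefully that every conditioning variable --- especially $\Theta_t^{{k}}$, whose components for ${{j}} \in \mathcal{B}^{{k}}$ are generated from the strictly smaller set $A_t^{{j}}$ rather than $A_t^{{k}}$ --- is measurable with respect to the primitive randomness up to time $t$ together with the deterministic strategy, so that the new noise $(W_t, V_{t+1}^{1:K})$ is genuinely independent of it. Everything downstream of that is the routine POMDP belief recursion.
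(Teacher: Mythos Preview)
Your proposal is correct: it is the standard nonlinear-filtering/Bayes-update argument for POMDPs, leveraging Lemma~\ref{state_suff_k} and Assumption~\ref{prim_asu} exactly as one should. The paper itself omits the proof of this lemma due to space limitations and refers to \cite{2019Aditya_arXiv}, so there is no in-paper argument to compare against; your derivation is precisely the canonical one and almost certainly coincides with the one in the companion report.
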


The second property of the information state $\Pi_t^{{k}}$ is that its evolution is Markovian.

\begin{lemma} \label{pi_k_2} 
The evolution of the information state $\Pi_t$ is a controlled Markov Chain with $\Theta^{{k}}_{t}$ as the control action at time $t$
\begin{align}
\mathbb{P}^{\boldsymbol{\psi}^{{k}}}(\Pi_{t+1}^{{k}}|A_t^{{k}},\Pi^{{k}}_{0:t}, \Theta^{{k}}_{0:t}) = \mathbb{P}^{\boldsymbol{\psi}^{{k}}}(\Pi_{t+1}^{{k}}|\Pi^{{k}}_{t}, \Theta^{{k}}_{t}).
\end{align}
\end{lemma}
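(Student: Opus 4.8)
The plan is to leverage Lemma \ref{pi_k_1}, which already gives a deterministic update rule $\Pi_{t+1}^{{k}} = F_{t+1}^{{k}}(\Pi_t^{{k}},\Theta_t^{{k}},Z_{t+1}^{{k}})$ that does not depend on $\boldsymbol{\psi}^{{k}}$. Because of this, the only remaining randomness driving $\Pi_{t+1}^{{k}}$, given $\Pi_t^{{k}}$ and $\Theta_t^{{k}}$, is the new information $Z_{t+1}^{{k}}$. So the core of the argument reduces to showing that, conditioned on $\Pi_t^{{k}}$ and $\Theta_t^{{k}}$, the conditional law of $Z_{t+1}^{{k}}$ does not depend on the earlier information states $\Pi_{0:t-1}^{{k}}$, the earlier prescriptions $\Theta_{0:t-1}^{{k}}$, or the accessible information $A_t^{{k}}$ beyond what is already captured by $\Pi_t^{{k}}$.

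First I would write out the left-hand side by conditioning on the value of $Z_{t+1}^{{k}}$ and using Lemma \ref{pi_k_1} to replace the event $\{\Pi_{t+1}^{{k}} = \pi'\}$ by the event $\{Z_{t+1}^{{k}} \in (F_{t+1}^{{k}})^{-1}(\pi' \mid \Pi_t^{{k}}, \Theta_t^{{k}})\}$; this turns the claim into a statement purely about the conditional distribution of $Z_{t+1}^{{k}}$. Next I would invoke the state-sufficient-for-input-output property from Lemma \ref{state_suff_k}, specifically the observation update \eqref{eq_St_k2}, $Z^{{k}}_{t+1} = \hat{h}^{{k}}_{t}(S^{{k}}_{t},\Theta_t^{{k}},V_{t+1}^{1:K})$. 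Combining this with the state update \eqref{eq_St_k1} and Assumption \ref{prim_asu} (the noises $V_{t+1}^{1:K}$ are independent of the primitive variables and past noises, hence of $S_{0:t}^{{k}}$, $A_t^{{k}}$, and $\Theta_{0:t}^{{k}}$ given the policy), I would show
\begin{align}
\mathbb{P}^{\boldsymbol{\psi}^{{k}}}\!\big(Z_{t+1}^{{k}} = z \,\big|\, A_t^{{k}}, \Pi^{{k}}_{0:t}, \Theta^{{k}}_{0:t}\big)
&= \sum_{s^{{k}}_t} \mathbb{P}\!\big(\hat{h}^{{k}}_{t}(s^{{k}}_t,\Theta_t^{{k}},V_{t+1}^{1:K}) = z\big)\,
\mathbb{P}^{\boldsymbol{\psi}^{{k}}}\!\big(S^{{k}}_t = s^{{k}}_t \,\big|\, A_t^{{k}}, \Pi^{{k}}_{0:t}, \Theta^{{k}}_{0:t}\big).
\end{align}
Then the key reduction is $\mathbb{P}^{\boldsymbol{\psi}^{{k}}}(S^{{k}}_t = s^{{k}}_t \mid A_t^{{k}}, \Pi^{{k}}_{0:t}, \Theta^{{k}}_{0:t}) = \Pi_t^{{k}}(s^{{k}}_t)$: since $\Pi_t^{{k}}$ is by definition $\mathbb{P}^{\boldsymbol{\psi}^{{k}}}(S^{{k}}_t = \cdot \mid A_t^{{k}}, \Theta^{{k}}_{0:t-1})$, and $\Pi^{{k}}_{0:t}$ and $\Theta^{{k}}_t$ are themselves functions of $(A_t^{{k}}, \Theta^{{k}}_{0:t-1})$ (the former by iterating Lemma \ref{pi_k_1} from a deterministic $\Pi_0^{{k}}$, the latter by \eqref{eq_gen_pres} as a function of $A_t^{{k}}$), conditioning on them adds no information. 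Substituting back shows the right-hand side depends on the conditioning variables only through $\Pi_t^{{k}}$ and $\Theta_t^{{k}}$, which is exactly the Markov property.

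The main obstacle I anticipate is the careful handling of the prescriptions $\Theta_t^{{k}}$ as conditioning variables: because the components $\Gamma_t^{[{{k}},{{j}}]}$ for ${{j}} \in \mathcal{B}^{{k}}$ are generated from $A_t^{{j}} \subset A_t^{{k}}$ rather than from $A_t^{{k}}$ (the asymmetry flagged in the text just before the definition of the information state), one must be careful that $\Theta_t^{{k}}$ is still measurable with respect to $A_t^{{k}}$ — which it is, by property \eqref{ainfo_prop_1} — so that conditioning on $\Theta_t^{{k}}$ does not smuggle in information outside $A_t^{{k}}$. A secondary technical point is justifying the independence of $V_{t+1}^{1:K}$ from the full conditioning sigma-algebra $\sigma(A_t^{{k}}, \Pi^{{k}}_{0:t}, \Theta^{{k}}_{0:t})$ under an arbitrary $\boldsymbol{\psi}^{{k}}$; this follows because all these variables are functions of $X_0$, $W_{0:t-1}$, and $V_{0:t}^{1:K}$ (and the strategy, which is deterministic), all of which are independent of $V_{t+1}^{1:K}$ by Assumption \ref{prim_asu}. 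Once these measurability and independence bookkeeping items are settled, the Markov property drops out of \eqref{eq_St_k1}, \eqref{eq_St_k2}, and Lemma \ref{pi_k_1} essentially by inspection.
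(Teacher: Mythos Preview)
The paper does not actually contain a proof of this lemma: it states ``Due to space limitation, the proofs of the following three properties are omitted but can be found in \cite{2019Aditya_arXiv}.'' So there is nothing in the paper to compare against directly.

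That said, your proposal is correct and is precisely the standard POMDP-style argument one would expect (and almost certainly what the extended version does). The chain of reasoning---reduce via Lemma~\ref{pi_k_1} to the conditional law of $Z_{t+1}^{{k}}$, express $Z_{t+1}^{{k}}$ through \eqref{eq_St_k2}, use Assumption~\ref{prim_asu} to split off $V_{t+1}^{1:K}$, and then collapse the conditional law of $S_t^{{k}}$ to $\Pi_t^{{k}}$---is sound. Your two flagged obstacles are genuine and you handle them correctly: the measurability of $\Theta_t^{{k}}$ with respect to $A_t^{{k}}$ indeed follows from \eqref{ainfo_prop_1} applied to each component in \eqref{eq_gen_pres}, and the independence of $V_{t+1}^{1:K}$ from $\sigma(A_t^{{k}}, \Pi^{{k}}_{0:t}, \Theta^{{k}}_{0:t})$ follows because all of those variables are deterministic functions of $(X_0, W_{0:t-1}, V_{0:t}^{1:K})$ once $\boldsymbol{\psi}^{{k}}$ is fixed. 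One small point worth making explicit in the write-up: after you show that $\mathbb{P}^{\boldsymbol{\psi}^{{k}}}(Z_{t+1}^{{k}}=z\mid A_t^{{k}},\Pi^{{k}}_{0:t},\Theta^{{k}}_{0:t})$ equals an expression measurable in $(\Pi_t^{{k}},\Theta_t^{{k}})$ alone, invoke the tower property to conclude that this expression also equals $\mathbb{P}^{\boldsymbol{\psi}^{{k}}}(Z_{t+1}^{{k}}=z\mid \Pi_t^{{k}},\Theta_t^{{k}})$, which then gives the stated identity.
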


The third property of the information state $\Pi_t^{{k}}$ is that the expected cost incurred by the system at time $t$ can be written as a function of $\Pi_t^{{k}}$.

\begin{lemma} \label{pi_k_3} 
There exists a function $C^{{k}}_t$, independent of the prescription strategy $\boldsymbol{\psi}^{{k}}$, such that
\begin{equation}
    \mathbb{E}^{\boldsymbol{\psi}^{{k}}}\big[\hat{c}^{{k}}_t(S^{{k}}_t,\Theta^{{k}}_t)|A^{{k}}_t,\Theta^{{k}}_{0:t}\big] = C_t^{{k}}(\Pi_t^{{k}},\Theta_t^{{k}}).
\end{equation}
\end{lemma}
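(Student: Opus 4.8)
The plan is to compute the conditional expectation directly by conditioning on the state $S_t^{{k}}$ and then marginalizing against the information state $\Pi_t^{{k}}$. First I would write
\begin{align*}
    \mathbb{E}^{\boldsymbol{\psi}^{{k}}}\big[\hat{c}^{{k}}_t(S^{{k}}_t,\Theta^{{k}}_t)\,\big|\,A^{{k}}_t,\Theta^{{k}}_{0:t}\big]
    = \sum_{s^{{k}}_t \in \mathcal{S}^{{k}}_t} \hat{c}^{{k}}_t(s^{{k}}_t,\Theta^{{k}}_t)\,
      \mathbb{P}^{\boldsymbol{\psi}^{{k}}}\big(S^{{k}}_t = s^{{k}}_t\,\big|\,A^{{k}}_t,\Theta^{{k}}_{0:t}\big),
\end{align*}
which is valid because $\Theta^{{k}}_t$ is a deterministic function of $A_t^{{k}}$ (through $\boldsymbol{\psi}^{{k}}$ and, for the components directed at agents beyond $k$, through the sub-prescription strategies applied to $A_t^{{j}} \subseteq A_t^{{k}}$), so conditioning on $\Theta^{{k}}_{0:t}$ adds nothing beyond what is in $A_t^{{k}}$, and $\hat{c}^{{k}}_t$ depends on the randomness only through $S_t^{{k}}$ by \eqref{eq_St_k3} in Lemma~\ref{state_suff_k}.

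The key step is to recognize that the conditional probability appearing in the sum is, by definition, the information state: $\mathbb{P}^{\boldsymbol{\psi}^{{k}}}(S^{{k}}_t = s^{{k}}_t\mid A^{{k}}_t,\Theta^{{k}}_{0:t}) = \mathbb{P}^{\boldsymbol{\psi}^{{k}}}(S^{{k}}_t = s^{{k}}_t\mid A^{{k}}_t,\Theta^{{k}}_{0:t-1}) = \Pi_t^{{k}}(s^{{k}}_t)$, where the first equality again uses that $\Theta_t^{{k}}$ is measurable with respect to $A_t^{{k}}$. Substituting, the expression becomes $\sum_{s^{{k}}_t} \hat{c}^{{k}}_t(s^{{k}}_t,\Theta^{{k}}_t)\,\Pi_t^{{k}}(s^{{k}}_t)$, which depends on the conditioning variables only through the pair $(\Pi_t^{{k}},\Theta_t^{{k}})$. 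Defining
\begin{equation*}
    C_t^{{k}}(\pi,\theta) := \sum_{s^{{k}}_t \in \mathcal{S}^{{k}}_t} \hat{c}^{{k}}_t(s^{{k}}_t,\theta)\,\pi(s^{{k}}_t)
\end{equation*}
gives the claimed function, and since $\hat{c}^{{k}}_t$ comes from Lemma~\ref{state_suff_k} and the summation does not reference $\boldsymbol{\psi}^{{k}}$ at all, $C_t^{{k}}$ is manifestly independent of the prescription strategy.

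The main obstacle — and the one point that needs care rather than routine bookkeeping — is justifying that conditioning on $\Theta^{{k}}_{0:t}$ is equivalent to conditioning on $\Theta^{{k}}_{0:t-1}$ and that $\Theta^{{k}}_t$ is a legitimate "output" of the conditional expectation, i.e.\ that it is $\sigma(A_t^{{k}})$-measurable. This follows from the construction in \eqref{eq_gen_pres}: the components $\Gamma_t^{[{{k}},{{j}}]}$ with ${{j}} \notin \mathcal{B}^{{k}}$ are generated from $A_t^{{k}}$, and those with ${{j}} \in \mathcal{B}^{{k}}$ from $A_t^{{j}} \subseteq A_t^{{k}}$ by \eqref{ainfo_prop_1}, so $\Theta_t^{{k}} = (\Gamma_t^{[{{k}},1]},\ldots,\Gamma_t^{[{{k}},K]})$ is deterministically fixed once $A_t^{{k}}$ is known. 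With this observation the rest is a one-line marginalization, so I expect the proof to be short; this is presumably why the paper defers the details to \cite{2019Aditya_arXiv}.
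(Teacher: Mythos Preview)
The paper does not actually prove this lemma; it explicitly omits the argument and refers the reader to \cite{2019Aditya_arXiv}. Your proposal is correct and is exactly the standard POMDP-style computation one would expect in that reference: expand the conditional expectation as a sum over realizations of $S_t^{{k}}$, identify the resulting conditional probability with $\Pi_t^{{k}}$, and read off $C_t^{{k}}(\pi,\theta)=\sum_{s}\hat{c}_t^{{k}}(s,\theta)\pi(s)$. Your observation that $\Theta_t^{{k}}$ is $\sigma(A_t^{{k}})$-measurable (via \eqref{eq_gen_pres} and \eqref{ainfo_prop_1}) is the only non-routine step, and you handle it correctly; together with \eqref{ainfo_prop_2} it also justifies dropping $\Theta_{0:t-1}^{{k}}$ from the conditioning if one wishes, since each $\Theta_s^{{k}}$ is determined by $A_s^{{k}}\subset A_t^{{k}}$.
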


In Lemmas \ref{pi_k_1} through \ref{pi_k_3}, we established that the information state $\Pi_t^{{k}}$ evolves as a controlled Markov chain with control inputs $\Theta_t^{{k}}$.

\subsection{Structural Results}

We start by presenting a structural result for agent $K$. By definition, the set of agents beyond agent $K$ contains only agent $K$, i.e., $\mathcal{B}^{K} = \{K\}$. Using \eqref{eq_gen_pres}, this implies that for all agents ${{k}} \in \mathcal{K}$, the prescription component $\Gamma^{[{{K}},{{k}}]}_t$ is a function of the accessible information $A_t^{{[K]}}$. This leads to the following result derived in \cite{14} through the common information approach.

\begin{lemma}\label{lem_case_K}
Consider agent $K$. There exists an optimal prescription strategy $\boldsymbol{\psi}^{*K}$ of the form
\begin{equation}
    \Gamma_t^{*[{{K}},{{k}}]} = \psi_t^{*[{{K}},{{k}}]}(\Pi_t^{K}), \label{eq_common_info}
\end{equation}
that optimizes the performance criterion \eqref{per_cri_2} in Problem 2.
\end{lemma}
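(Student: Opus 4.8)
The plan is to recognize that, \emph{for the specific agent $K$}, Problem 2 collapses to an ordinary finite-horizon POMDP seen by the fictitious designer, so the classical sufficient-statistic / dynamic-programming reduction applies without the obstruction flagged after Lemma~\ref{state_suff_k}. First I would record the structural fact that $\mathcal{B}^{K}=\{K\}$, so by \eqref{eq_gen_pres} \emph{every} component $\Gamma_t^{[K,k]}$, $k\in\mathcal{K}$, is generated as $\psi_t^{[K,k]}(A_t^{K})$ --- no component is generated from an $A_t^{j}$ with $j\neq K$. Hence the designer holding $M_t^{K}$ chooses $\Theta_t^{K}$ measurably with respect to $A_t^{K}=Z_{0:t}^{K}$, exactly as in a POMDP with state $S_t^{K}$ (Lemma~\ref{state_suff_k}), control $\Theta_t^{K}$, observation $Z_t^{K}$, and per-stage cost $\hat{c}_t^{K}(S_t^{K},\Theta_t^{K})$. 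This is the viewpoint through which the result was obtained in \cite{14}, $A_t^{K}$ playing the role of the common information and $\Theta_t^{K}$ the coordinator's prescriptions.

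Next I would invoke Lemmas~\ref{pi_k_1}--\ref{pi_k_3}. Together they give: (i) $\Pi_{t+1}^{K}=F_{t+1}^{K}(\Pi_t^{K},\Theta_t^{K},Z_{t+1}^{K})$, so $\Pi_t^{K}$ is a deterministic function of $(A_t^{K},\Theta_{0:t-1}^{K})$ and is therefore computable by the designer; (ii) $\{\Pi_t^{K}\}$ is a controlled Markov chain with action $\Theta_t^{K}$; (iii) $\mathbb{E}^{\boldsymbol{\psi}^{K}}[\hat{c}_t^{K}(S_t^{K},\Theta_t^{K})\mid A_t^{K},\Theta_{0:t}^{K}]=C_t^{K}(\Pi_t^{K},\Theta_t^{K})$. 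Consequently Problem 2 for agent $K$ is equivalent to a fully observed MDP with state $\Pi_t^{K}\in\mathscr{P}_t^{K}$ and action $\Theta_t^{K}\in\mathscr{G}_t^{K}$. Since $\mathcal{S}_t^{K}$ is finite, so are $\mathscr{P}_t^{K}$ and $\mathscr{G}_t^{K}$ at each $t$, and the backward recursion is well defined: set $V_{T+1}^{K}\equiv 0$ and, for $t=T,\dots,0$,
\begin{equation*}
V_t^{K}(\pi)=\min_{\theta\in\mathscr{G}_t^{K}}\Big\{C_t^{K}(\pi,\theta)+\mathbb{E}\big[V_{t+1}^{K}\big(F_{t+1}^{K}(\pi,\theta,Z_{t+1}^{K})\big)\,\big|\,\Pi_t^{K}=\pi,\Theta_t^{K}=\theta\big]\Big\}.
\end{equation*}
Let $\Theta_t^{*K}=\psi_t^{*K}(\Pi_t^{K})$ be any measurable selection of a minimizer; decomposing $\psi_t^{*K}$ into its components $\psi_t^{*[K,k]}$ yields $\Gamma_t^{*[K,k]}=\psi_t^{*[K,k]}(\Pi_t^{K})$, which is \eqref{eq_common_info}.

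Finally I would establish optimality within the full class $\Psi^{K}$. By backward induction on $t$, using Lemmas~\ref{pi_k_1}--\ref{pi_k_3}, for any $\boldsymbol{\psi}^{K}\in\Psi^{K}$ one shows $\mathbb{E}^{\boldsymbol{\psi}^{K}}[\sum_{\tau=t}^{T}\hat{c}_\tau^{K}(S_\tau^{K},\Theta_\tau^{K})\mid A_t^{K},\Theta_{0:t-1}^{K}]\ge V_t^{K}(\Pi_t^{K})$, with equality when $\Theta_\tau^{K}$ is chosen by the minimizing strategy above for $\tau\ge t$; taking $t=0$ and the total expectation gives $\mathcal{J}^{K}(\boldsymbol{\psi}^{K})\ge\mathbb{E}[V_0^{K}(\Pi_0^{K})]=\mathcal{J}^{K}(\boldsymbol{\psi}^{*K})$. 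The step I expect to be the main obstacle is exactly this last ``no loss of optimality'' passage: one must justify that the outer minimization over strategies $\psi_t^{[K,k]}$ that are \emph{arbitrary} functions of $A_t^{K}$ can be replaced, term by term inside the DP, by a minimization over functions of $\Pi_t^{K}$ alone --- i.e., that $\Pi_t^{K}$ is a sufficient statistic for the designer's decision. This is the classical POMDP reduction, and it goes through cleanly here only because, by the first step, all components of $\Theta_t^{K}$ are $A_t^{K}$-measurable, so none of the asymmetry obstructing the reduction for a general agent $k<K$ is present; care is still required to choose the componentwise minimizers consistently so that $\psi_t^{*K}=(\psi_t^{*[K,1]},\dots,\psi_t^{*[K,K]})$ is a genuine element of $\Psi^{K}$.
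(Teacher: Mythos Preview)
Your proposal is correct and follows essentially the same route the paper relies on: the paper does not give an independent proof but observes that $\mathcal{B}^{K}=\{K\}$, so by \eqref{eq_gen_pres} every component $\Gamma_t^{[K,k]}$ is generated from $A_t^{K}$, and then invokes the common-information result of \cite{14}, which is precisely the POMDP reduction you spell out using Lemmas~\ref{pi_k_1}--\ref{pi_k_3} and the backward recursion. One small slip worth fixing: $\mathscr{P}_t^{K}=\Delta(\mathcal{S}_t^{K})$ is the probability simplex, not a finite set; this is harmless for your argument since the minimum in the recursion exists already because $\mathscr{G}_t^{K}$ is finite, but you should not claim finiteness of the belief space itself.
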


We know that for any two agents ${{k}} \in \mathcal{K}$ and ${{j}} \in \mathcal{B}^{{k}}$, we have $A_t^{{j}} \subset A_t^{{k}}$. Given the accessible information $A_t^{{k}}$ and the optimal prescription strategy $\boldsymbol{\psi}^{*{{k}}}$, agent ${{k}}$ can derive the optimal complete prescriptions $\Theta_t^{*{{j}}}$ for every ${{j}} \in \mathcal{B}^{{k}}$. This leads to the following structural result, proved in  \cite{2019Aditya_arXiv}.

\begin{theorem} \label{struct_result}
Consider agent ${{k}} \in \mathcal{K}$. There exists an optimal prescription strategy $\boldsymbol{\psi}^{*{{k}}}$ of the form
\begin{align}
\Gamma_t^{*[{{k}},{{j}}]}(\cdot) =
\begin{cases}
{\psi}_t^{*[{{k}},{{j}}]}(\Pi^{{k}}_t,\ldots,\Pi_t^{K}), \text{\emph{ if }} {{j}} \not\in\mathcal{B}^{{k}}, \\
{\psi}_t^{*[{{k}},{{j}}]}(\Pi^{{j}}_t,\ldots,\Pi_t^{K}), \text{\emph{ if }} {{j}} \in \mathcal{B}^{{k}},
\end{cases} \label{eq_struct_result}
\end{align}
that optimizes the performance criterion \eqref{per_cri_2} in Problem 2.
\end{theorem}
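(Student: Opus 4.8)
The plan is to prove the claim by backward induction on the agent index, descending from $k = K$ down to $k = 1$, using the already-established positional relationships (Lemmas \ref{lem_psi_relation1} and \ref{lem_psi_relation2}) together with the POMDP structure of Problem 2 in the information state $\Pi_t^{k}$ (Lemmas \ref{pi_k_1}--\ref{pi_k_3}). The base case is exactly Lemma \ref{lem_case_K}: for agent $K$ we have $\mathcal{B}^{K} = \{K\}$, every prescription component $\Gamma_t^{[K,k]}$ is a function of $A_t^{K}$, and the resulting problem is a genuine POMDP in $(S_t^{K},\Theta_t^{K},Z_t^{K})$, so the standard POMDP structural result gives an optimal $\boldsymbol{\psi}^{*K}$ of the form $\Gamma_t^{*[K,k]} = \psi_t^{*[K,k]}(\Pi_t^{K})$. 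This matches \eqref{eq_struct_result} read at $k=K$, since $\mathcal{B}^{K}=\{K\}$ forces only the second branch to appear and the list $(\Pi_t^{j},\ldots,\Pi_t^{K})$ collapses to $\Pi_t^{K}$.

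For the inductive step, suppose the claim holds for agent $k+1$, i.e. there is an optimal prescription strategy $\boldsymbol{\psi}^{*(k+1)}$ whose components depend only on the tuple of information states $(\Pi_t^{k+1},\ldots,\Pi_t^{K})$ (in the appropriate branch of \eqref{eq_struct_result}). By Lemma \ref{lem_equivalence} the optimal cost is the same for every agent's reformulation, so it suffices to exhibit an optimal $\boldsymbol{\psi}^{*k}$ of the stated form and then invoke Lemmas \ref{lem_psi_g_relation}--\ref{lem_psi_g_relation_inv} to recover an optimal $\boldsymbol{g}^{*}$. From the designer-for-agent-$k$ viewpoint, the components $\Gamma_t^{[k,j]}$ with $j \in \mathcal{B}^{k}$, $j > k$, are \emph{not} free: they must be generated from $A_t^{j} \subsetneq A_t^{k}$, not from $A_t^{k}$. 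The key observation is that, by \eqref{ainfo_prop_1}, agent $k$ has access to $A_t^{j}$ for every $j \in \mathcal{B}^{k}$, hence can compute $\Pi_t^{j}$ for every $j \ge k$; and by the induction hypothesis applied to agent $k+1$, together with the positional relationship $e^{[k,k+1]}$ of Lemma \ref{lem_psi_relation1} (run in the direction $k+1 \to k$, or equivalently using $e^{[k+1,k]}$ and matching via \eqref{eq_u_with_another_psi}), the optimal choices of $\Gamma_t^{[k,j]}$ for $j > k$ can be taken to depend only on $(\Pi_t^{\max(j,k+1)},\ldots,\Pi_t^{K})$, which is a function of information available to agent $k$. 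This pins down the ``beyond'' components and reduces the agent-$k$ problem to choosing only $\Gamma_t^{[k,1]},\ldots,\Gamma_t^{[k,k]}$ — all of which are functions of the full accessible information $A_t^{k}$ — which is now an honest POMDP with state $S_t^{k}$, control the reduced prescription, output $Z_t^{k}$, and cost $\hat c_t^{k}$ by Lemma \ref{state_suff_k} and Lemmas \ref{pi_k_1}--\ref{pi_k_3}. The POMDP structural result then gives that these components may be taken as functions of $\Pi_t^{k}$ alone; combined with the dependence on $\Pi_t^{k},\ldots,\Pi_t^{K}$ inherited for the ``not-beyond'' entries (agents $1,\ldots,k$ all see the same $A_t^{k}$, but the prescriptions for them in the optimal policy of the larger-index designers used $\Pi_t^{k},\ldots,\Pi_t^{K}$), we obtain exactly \eqref{eq_struct_result}.

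The main obstacle is the bookkeeping around \emph{consistency} of the prescriptions generated by different designers: one must check that when agent $k$'s designer fixes $\Gamma_t^{[k,j]}$ for $j>k$ according to the induction hypothesis, the induced control actions $U_t^{j}$ agree with those that agent $j$'s own designer would produce from $\Gamma_t^{[j,j]}$, so that no cost is lost and feasibility (measurability with respect to the correct $\sigma$-algebra $A_t^{j}$) is preserved. This is precisely what \eqref{eq_u_with_another_psi} and the three-case decompositions in Lemmas \ref{lem_psi_relation1}--\ref{lem_psi_relation2} are for, so the argument is structurally forced, but verifying that the restricted minimization (over only $\Gamma_t^{[k,1]},\ldots,\Gamma_t^{[k,k]}$, with the $j>k$ entries clamped) still attains the global optimum of Problem 2 requires an interchange-of-minimization argument: minimizing jointly over $\boldsymbol{\psi}^{k}$ equals minimizing first over the clamped optimal choice for the ``beyond'' block (which by hypothesis is globally optimal for those components regardless of the rest) and then over the remaining block. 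I would carry this out by writing the cost-to-go $J_t^{k}(\Pi_t^{k},\ldots,\Pi_t^{K})$ and performing the standard dynamic-programming backward recursion, at each stage separating the minimization over the free components from the clamped ones, and appealing to Lemmas \ref{pi_k_1}--\ref{pi_k_3} to keep every term a function of the information-state tuple; a careful statement of the measurability constraints (each $\Gamma_t^{[k,j]}$, $j\in\mathcal{B}^k$, constant on level sets of $A_t^{j}$) is the one place where the write-up must be pedantic.
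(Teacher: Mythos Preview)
The paper does not include its own proof of Theorem~\ref{struct_result}; it only states the result and refers to \cite{2019Aditya_arXiv}. So there is nothing in the present paper to compare your argument against line by line. That said, your proposed architecture---backward induction on the agent index with Lemma~\ref{lem_case_K} as the base case, using the positional relationships (Lemmas~\ref{lem_psi_relation1}--\ref{lem_psi_relation2}) to port the induction hypothesis from agent $k+1$ to agent $k$, clamping the $j>k$ components, and then invoking a POMDP-type structural result for the remaining components---is the natural route and is almost certainly the intended one, given how the paper sets up Lemmas~\ref{pi_k_1}--\ref{pi_k_3} and the discussion preceding the theorem.

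There is one imprecision worth tightening. You write that after clamping, ``the POMDP structural result then gives that these components may be taken as functions of $\Pi_t^{k}$ alone''. That is not quite what happens: once the $j>k$ components are fixed as functions of $(\Pi_t^{k+1},\ldots,\Pi_t^{K})$, the instantaneous cost $\hat c_t^{k}(S_t^{k},\Theta_t^{k})$ and the state transition both depend on $(\Pi_t^{k+1},\ldots,\Pi_t^{K})$ through the clamped block. These quantities are measurable with respect to $A_t^{k}$ (since $A_t^{j}\subset A_t^{k}$ for $j\in\mathcal{B}^{k}$), but they are \emph{not} functions of $\Pi_t^{k}$ alone. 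The correct conclusion of the reduced problem is that the optimal free components depend on the pair $\bigl(\Pi_t^{k},(\Pi_t^{k+1},\ldots,\Pi_t^{K})\bigr)$, i.e.\ on the whole tuple $(\Pi_t^{k},\ldots,\Pi_t^{K})$, which is exactly the form \eqref{eq_struct_result}. Your next sentence (``combined with the dependence on $\Pi_t^{k},\ldots,\Pi_t^{K}$ inherited\ldots'') gets to the right place, but the reason is the one above, not a separate inheritance step.

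Relatedly, for the DP recursion to close you need the tuple $(\Pi_t^{k},\ldots,\Pi_t^{K})$ to be a controlled Markov chain from agent~$k$'s viewpoint: that $\mathbb{P}\bigl((\Pi_{t+1}^{k},\ldots,\Pi_{t+1}^{K})\,\big|\,A_t^{k},\Theta_{0:t}^{k}\bigr)$ depends only on $(\Pi_t^{k},\ldots,\Pi_t^{K})$ and the current complete prescription. Lemma~\ref{pi_k_1} gives this for each $\Pi_t^{j}$ separately (with its own $\Theta_t^{j}$ and $Z_{t+1}^{j}$), but you still have to argue that the joint update, and in particular the conditional law of $(Z_{t+1}^{k},\ldots,Z_{t+1}^{K})$ given $A_t^{k}$, factors through the tuple. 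This is the step your last paragraph flags as ``pedantic''; it is where the actual work lies, and it should be stated and proved, not just asserted.
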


\subsection{A Comparison with Existing Approaches}

Among the existing approaches, the person-by-person approach and the designer's approach do not yield the kind of structural results presented in this paper. The graphical approach presented in \cite{mahajan2015algorithmic} has similarities with the prescription approach, but, it applies only to problems where agents have perfect observations. The common information approach in \cite{14} can be applied to the problem with the word-of-mouth communication structure to obtain the structural result presented in Lemma \ref{lem_case_K}, since by definition, the accessible information $A_t^{{K}}$ is the common information in the system. Thus, the control action for agent $k \in \mathcal{K}$ is given by
\begin{gather}
     U_t^{{*k}} = \Gamma_t^{*[{{K}},{{k}}]}(L_t^{[{{k}},{{K}}]}).
\end{gather}

In contrast, we see that when we consider Problem 2 for agent ${{k}}$, the control action of agent ${{k}}$ is given by
\begin{gather}
    U_t^{{*k}} = \Gamma_t^{*[{{k}},{{k}}]}(L_t^{[{{k}},{{k}}]}).
\end{gather}
Now, from \eqref{eq_partition}, we note that,
\begin{gather}
    A_t^{{k}} \cup L_t^{[{{k}},{{k}}]} = A_t^{{K}} \cup L_t^{[{{k}},{{K}}]}, \label{eq_comparison_1}
\end{gather}
and from \eqref{ainfo_prop_1} we have the relation,
\begin{gather}
    A_t^{{K}} \subset A_t^{{k}}, \label{eq_comparison_2}
\end{gather}
because $K \in \mathcal{B}^{{k}}$ for all ${{k}} \in \mathcal{K}$. Then, \eqref{eq_comparison_1} and \eqref{eq_comparison_2} imply,
\begin{gather}
    L_t^{[{{k}},{{k}}]} \subset L_t^{[{{k}},{{K}}]}.
\end{gather}

Thus, the prescription functions generated through the prescription approach have an equal or smaller domain when compared with those generated through the common information approach.

\section{CONCLUSIONS}

In this paper, we introduce a network of agents with a word-of-mouth communication structure, and analyze it using the prescription approach, which yielded some desired properties. We showed that the structural result derived through the common information approach can be considered as the outcome of one reformulations using the prescription approach. Finally, we provided, without proof, a preliminary structural result arising from the prescription approach. A direction for future research should seek to extend these results for a broader class of decentralized systems.

\bibliographystyle{ieeetr}
\bibliography{References}

\end{document}